\tikzset{->-/.style={decoration={  markings,  mark=at position #1 with
    {\arrow{>}}},postaction={decorate}}}
\tikzset{-<-/.style={decoration={  markings,  mark=at position #1 with
    {\arrow{<}}},postaction={decorate}}}
\patchcmd{\subsection}{-.5em}{.5em}{}{}
\tikzset{->-/.style={decoration={  markings,  mark=at position #1 with
    {\arrow{>}}},postaction={decorate}}}
\tikzset{-<-/.style={decoration={  markings,  mark=at position #1 with
    {\arrow{<}}},postaction={decorate}}}
\numberwithin{equation}{section}
\tikzset{->-/.style={decoration={  markings,  mark=at position #1 with
    {\arrow{>}}},postaction={decorate}}}
\tikzset{-<-/.style={decoration={  markings,  mark=at position #1 with
    {\arrow{<}}},postaction={decorate}}}
\newtheorem{theorem}{Theorem}[section]
\newtheorem{prop}[theorem]{Proposition}
\newtheorem{lemma}[theorem]{Lemma}
\newtheorem{definition}[theorem]{Definition}
\newtheorem{cor}[theorem]{Corollary}
\numberwithin{equation}{section}
\begin{document}
\pdfoutput=1 
\def\nn{node{$\bullet$}}
\def\ww{node{$\circ$}}

\title[Classification of Willmore surfaces with vanishing Gaussian curvature]{Classification of Willmore surfaces with vanishing Gaussian curvature}
\author{Yunqing Wu}
\address{Department of Mathematical Sciences\\Tsinghua University, Beijing\\P.R.China, 100084}
\email{yq-wu19@mails.tsinghua.edu.cn}

\begin{abstract}
	We classify simply-connected, complete Willmore surfaces with vanishing Gaussian curvature. We also study the Willmore cones in $\mathbb{R}^{3}$ and give a classification. As an application, we show that for a complete Willmore  embedding $f:\mathbb{R}^{2} \rightarrow \mathbb{R}^{3}$, if its corresponding Gaussian curvature is nonnegative and the image of its Gauss map lies in $\{ \theta \in \mathbb{S}^{2}: d_{\mathbb{S}^{2}}(\theta,\theta_{0}) \leq \alpha<\frac{\pi}{2}  \}$,  $f(\mathbb{R}^{2})$ is a plane. 
\end{abstract}

\maketitle

\titlecontents{section}[0em]{}{\hspace{.5em}}{}{\titlerule*[1pc]{.}\contentspage}
\titlecontents{subsection}[1.5em]{}{\hspace{.5em}}{}{\titlerule*[1pc]{.}\contentspage}
\tableofcontents


\section{Introduction}
Let $f:\mathbb{R}^{2} \rightarrow \mathbb{R}^{3}$ be an embedding, its Willmore functional is defined by
\begin{align}
\label{Intro1}
\mathcal{W}(f)=\frac{1}{4} \int_{ \mathbb{R}^{2} } |H|^{2} d \mu_{g},
\end{align}
where $g$ is the induced metric, $H=\Delta_{g} f$ is the mean curvature. Let $K_{g}$ be the Gaussian curvature. We say that $f$ is Willmore if it is the critical point of $\mathcal{W}$ and it is the solution of the Euler-Lagrange equation:
\begin{align}
\label{Intro2}
\Delta_{g} H+\frac{1}{2} (|H|^{2}-4K_{g}  )H=0. 
\end{align}

Willmore surfaces seem to be more complicated to study in contrast with minimal surfaces in view of the equation since (\ref{Intro2}) is a fourth order partial differential equation, which is hard to deal with directly. 
A natural question is to ask whether some well-known results of minimal surfaces still hold on general Willmore surfaces or not. For example, the classic Bernstein theorem says that every entire minimal graph in $\mathbb{R}^{3}$ is a plane. However, it is still unknown whether an entire Willmore graph satisfies Bernstein type theorem. 

There are some  Bernstein type theorems of entire Willmore graphs and Willmore surfaces under some extra assumptions. In \cite{CL131}, Chen and Lamm proved that every entire Willmore graph with finite $L^{2}$ norm of the second fundamental form is a plane. They also proved any entire Willmore graph with the form $\{(x,y,u(x)): (x,y) \in \mathbb{R}^{2}\} $ is a plane. 
Luo and Sun further proved that every entire Willmore graph with finite Willmore functional is still a plane in \cite{LS14}. In \cite{Li2016}, Li proved that for any complete Willmore embedding $f:\mathbb{R}^{2} \rightarrow \mathbb{R}^{3}$, $f(\mathbb{R}^{2})$ is a plane if it has finite $L^{2}$ norm of the second fundamental form. However, we should note that a complete Willmore embedding with finite Willmore functional may not be a plane since a helicoid is a counterexample. In \cite{ChenLi2017}, Chen and Li proved that any entire radially symmetric Willmore graph is a plane. 

Inspired by the above known results, we study whether a complete Willmore surface is a plane provided the $L^{1}$ norm of its Gaussian curvature is finite. Unfortunately, we can prove that the Bernstein type theorem fails on such Willmore surfaces by classifying simply-connected, complete Willmore surfaces with vanishing Gaussian curvature zero. 

Applying a classification result of complete surfaces with vanishing Gaussian curvature by Hartman and Nirenberg\cite{Hartman1959OnSI}(also by Massey\cite{Massey1962SurfacesOG}), a complete Willmore surface of Gaussian curvature zero can be locally written as a graph $\{(x,y,u(x)): x \in \mathcal{I} \subset \mathbb{R}, y \in \mathbb{R} \}$ if we choose an appropriate coordinate system. Therefore, (\ref{Intro2}) reduces to the following equation
\begin{align}
\label{Intro 3}
(      \frac{     w^{\prime \prime} ( 1+w^{2}) -\frac{5}{2} w( w^{\prime}  )^{2}          }{    (1+w^{2})^{\frac{7}{2}}              }               )^{\prime}=0,
\end{align}
where $w=u^{\prime}$. There exists a unique local solution $w$ to (\ref{Intro 3}) endowed with the initial conditions: 
$
w(0)=0, w^{\prime}(0)=0, w^{\prime \prime}(0)=C>0. 
$
The initial conditions imply geometric information: $\displaystyle H|_{(0,y,u(0))}=0, |\nabla_{\Sigma_{C}} H|_{(0,y,u(0))}=C$, where $\Sigma_{C}$ is the corresponding graph. By the simply-connectness assumption, we can show that adding such initial conditions is always allowed. By analysing the symmetry of $w$ and using the result in \cite{CL131}, we can extend $\Sigma_{C}$ and obtain a smooth periodic Willmore surface (still denoted by $\Sigma_{C}$). Nevertheless, $\Sigma_{C}$ is only an entire continuous Willmore graph.  In conclusion, we state the following theorem which gives a classification of Willmore surfaces with vanishing Gaussian curvature.  
 
\begin{theorem}
	\label{theorem intro}
	(1). 
	There exists a family of non-flat Willmore surfaces $\{ \Sigma_{C}:C \in \mathbb{R}^{+} \}$ satisfying: $\Sigma_{C}$ is determined only by $C$, $\Sigma_{C}$ has zero Gaussian curvature and the image of $\Sigma_{C}$'s Gauss map is  a great semi-circle. \\
	(2). Given any simply-connected, complete, non-flat Willmore surface $\Sigma$ with vanishing Gaussian curvature, there exists $C>0$ and an isometric transformation $\mathscr{I}_{C}$ in $\mathbb{R}^{3}$ such that
	$\Sigma=\mathscr{I}_{C}(\Sigma_{C})$. 
	\end{theorem}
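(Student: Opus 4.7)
The plan is to reparametrize the profile curve by arclength $s$ and reduce the problem to the classical free elastica equation in the plane, then combine existence/uniqueness of the elastica IVP with the Hartman--Nirenberg/Massey theorem already cited in the introduction.

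For Part (1), I would work with a cylinder $\gamma\times\mathbb{R}$ over a planar curve $\gamma$, where the Gaussian curvature vanishes and the mean curvature of the surface equals the signed curvature $\kappa$ of $\gamma$ (in arclength). First I would check that the Willmore equation $\Delta_g H+\tfrac12 H^3=0$ reduces to the free elastica equation $\ddot\kappa+\tfrac12\kappa^3=0$ (dots denote $d/ds$), and that (\ref{Intro 3}) is the rewriting of this equation in the graph parameter $x=\int\cos\phi\,ds$. Multiplying by $\dot\kappa$ gives the first integral $(\dot\kappa)^2+\tfrac14\kappa^4=E$, and the prescribed initial conditions $w(0)=w'(0)=0$, $w''(0)=C$ correspond exactly to $\kappa(0)=0$, $\dot\kappa(0)=C$, i.e.\ $E=C^2$. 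Since the first integral bounds $|\kappa|\le\sqrt{2C}$, the IVP has a unique \emph{global} smooth periodic solution on all of $\mathbb{R}$. I would then build the profile curve by integrating Frenet, $\phi(s)=\int_0^s\kappa$, $\gamma(s)=\int_0^s(\cos\phi,\sin\phi)\,dt$, and set $\Sigma_C:=\gamma\times\mathbb{R}$; this is by construction a smooth, complete, simply-connected, non-flat Willmore surface with $K_g\equiv 0$. A short change of variables (e.g.\ $u=\kappa^2$, then $u=2C\sin\vartheta$) shows $\int_0^{T/4}\kappa\,ds=\pi/2$, so over a quarter period $\phi$ sweeps $[0,\pi/2]$ and over a half period it sweeps $[0,\pi]$; consequently the image of the Gauss map of $\Sigma_C$ is exactly a great semi-circle of $\mathbb{S}^2$.

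For Part (2), I would apply the Hartman--Nirenberg/Massey theorem to write any simply-connected, complete, non-flat Willmore surface $\Sigma$ with $K_g\equiv 0$, up to an isometry of $\mathbb{R}^3$, as a cylinder $\gamma\times\mathbb{R}$ over a complete planar curve $\gamma$; simple-connectedness rules out the case $\gamma\cong S^1$, so $\gamma$ is parametrized by $\mathbb{R}$. The Willmore condition then forces $\gamma$ to be a free elastica and non-flatness forces $E>0$. Since $\kappa$ is periodic with zeros, I can translate the arclength parameter so that $\kappa(0)=0$, reverse orientation if needed so that $\dot\kappa(0)>0$, and apply a further rigid motion of $\mathbb{R}^3$ sending $\gamma(0)$ to the origin with initial tangent along $\hat x$ and the plane of $\gamma$ to the $xz$-plane. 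Setting $C:=\dot\kappa(0)$, the uniqueness part of the elastica IVP identifies $\gamma$ with the profile curve used in the construction of $\Sigma_C$, and composing the rigid motions produces the desired $\mathscr{I}_C$.

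The main obstacle I expect is reconciling the ``graph coordinate'' formulation of (\ref{Intro 3}), in which $w=u'$ blows up at the vertical tangents of $\gamma$ (equivalently at the extrema of $\kappa$), with the global arclength/elastica picture in which everything is smooth. Concretely the delicate parts are: (i) showing that, under the simply-connected hypothesis, one can always normalize coordinates so that the initial data $w(0)=w'(0)=0$, $w''(0)=C$ is realized at an inflection point of the profile (so the reduction to Part (1) really captures the given $\Sigma$); and (ii) piecing the local graph representation together across the vertical-tangent points using the reflection symmetry $x\mapsto -x$ of (\ref{Intro 3}) and the $C^1$-matching statement from \cite{CL131}, so as to recover the smooth cylindrical surface $\Sigma_C$ claimed in the theorem. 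This obstacle is also the reason that $\Sigma_C$ is only a \emph{continuous} entire Willmore graph even though it is smooth as a surface in $\mathbb{R}^3$.
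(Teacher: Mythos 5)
Your proposal is correct, and it reaches the classification by a genuinely different route from the paper. The paper never leaves the graph coordinate: it uses a Liouville argument on $\Delta_{\mathbb{R}^{2}}H=-\tfrac{1}{2}H^{3}$ to locate a point where $H=0$ so that the initial data $w(0)=w'(0)=0$, $w''(0)=C$ can be imposed, invokes Proposition \ref{prop no entire solution} (Chen--Lamm) to show the maximal interval $[-\rho,\rho]$ of (\ref{ODE 2}) is finite, integrates explicitly to obtain $w=\tfrac{C}{2}\bigl(\int_{0}^{x}(1+w^{2})^{5/4}\bigr)^{2}$ and $H=\sqrt{2C}\,(1+w^{-2})^{-1/4}$, and then rotates the frame by $\pi/2$ at $x=\rho$ (where $H=\sqrt{2C}$ and $|\nabla_{\Sigma}H|=0$ force the new flux constant $\tilde C$ to vanish) and iterates to assemble the periodic surface. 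Your arclength formulation replaces all of this with $\ddot\kappa+\tfrac{1}{2}\kappa^{3}=0$ and its energy integral $\dot\kappa^{2}+\tfrac{1}{4}\kappa^{4}=C^{2}$: global existence, periodicity, the bound $|\kappa|\le\sqrt{2C}$ (the paper's $H(\rho)=\sqrt{2C}$), and the quarter-period turning $\int_{0}^{T/4}\kappa\,ds=\pi/2$ all follow at once, and your turning-angle computation actually \emph{proves} the great semi-circle statement that the paper only asserts at the end of its proof. The two pictures are consistent: the conserved quantity in (\ref{Intro 3}) equals $\dot\kappa\cos\phi+\tfrac{1}{2}\kappa^{2}\sin\phi$ (the horizontal force of the free elastica), which reduces to $\dot\kappa(0)=C$ at your normalized inflection point, so your matching of initial data is exact. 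For Part (2) both arguments rest on Hartman--Nirenberg/Massey plus ODE uniqueness; you use the cylinder structure globally and get the zero of $\kappa$ for free from the phase portrait, while the paper needs the Liouville step before it can even set up the graph. The obstacle you flag in your last paragraph --- gluing graph pieces across vertical tangents --- is real for the paper's route (it is exactly the Step 2/Step 3 rotation argument) but is an artifact of the graph presentation: in your construction the surface is smooth by fiat since it is parametrized by arclength from the start, so no additional matching via \cite{CL131} is needed.
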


Theorem \ref{theorem intro} also implies that Willmore surfaces do not satisfy the Osserman type theorem. 
Recall that Osserman conjectured that the Gauss map of an orientable, complete, non-flat, immersed minimal surface with finite total curvature in $\mathbb{R}^{3}$ cannot miss 3 points of $\mathbb{S}^{2}$. In \cite{Fujimoto}, Fujimoto proved that the Gauss map of any orientable, complete, non-flat minimal surface in $\mathbb{R}^{3}$ can omit at most four points of $\mathbb{S}^{2}$. Fujimoto's result is optimal since the Gauss map of a Scherk surface omits four points. However, the Gauss map of a Willmore surface could miss almost all points of $\mathbb{S}^{2}$ by Theorem \ref{theorem intro} and this is a geometric difference between  minimal surfaces and general Willmore surfaces. 

We next study the Willmore cones in $\mathbb{R}^{3}$. More precisely, we consider $\mathscr{C}_{\gamma}=\{r \gamma(\mathbb{R}):r>0\}$ where $\gamma:\mathbb{R} \rightarrow \mathbb{S}^{2}$ is an immersion with $|\gamma(s)|=1, |\gamma^{\prime}(s)|=1$. We call it a Willmore cone if it satisfies the Willmore equation (\ref{Intro2}). Let $H_{\mathscr{C}_{\gamma} }$ and $K_{\mathscr{C}_{\gamma} }$ denote the mean curvature and Gaussian curvature of $\mathscr{C}_{\gamma}$, respectively. Due to the cone structure, we could obtain $K_{\mathscr{C}_{\gamma} }=0$, $\displaystyle H_{\mathscr{C}_{\gamma} }(r,s)=\frac{ \mathcal{H}(s)}{r}$ where $\mathcal{H}$ satisifies
$\displaystyle
\mathcal{H}^{\prime \prime}(s)=-\mathcal{H}(s)( 1+\frac{1}{2} \mathcal{H}^{2}(s)   ). 
$
It is easy to check that $\mathcal{H}^{\prime}$ must has a zero point if it is an entire function. We set $\mathcal{H}_{a}$ be a solution of 
\begin{align*}
\mathcal{H}^{\prime \prime}(s)=-\mathcal{H}(s)( 1+\frac{1}{2} \mathcal{H}^{2}(s)   ), \quad \mathcal{H}(0)=a, \quad \mathcal{H}^{\prime}(0)=a,
\end{align*}
and $\gamma_{a}(s)$ be a curve whose intrinsic curvature in $\mathbb{S}^{2}$ is $\mathcal{H}_{a}(s)$. 
We will show that $\mathcal{H}_{a}$ and $\gamma_{a}$ are periodic. Using the symmetry and periodic property of $\mathcal{H}_{a}$, we can show that if the image of some $\gamma$ is embedded and closed in $\mathbb{S}^{2}$(which implies $\mathscr{C}_{\gamma}$ has at most one singularity), its length will be strictly larger than 2$\pi$ unless it is not a great circle. 

As an application, we can prove the following  theorem. 
\begin{theorem}
	\label{theorem berstein-type main theorem}
	There does not exist a complete  Willmore  embedding $f:\mathbb{R}^{2} \rightarrow \mathbb{R}^{3}$ satisfying the following properties: \\
	($\mathscr{M}1$). There exists $C>0$ such that $0 < \displaystyle \int_{\mathbb{R}^{2}} K_{g} d \mu_{g} \leq 2\pi, \quad \int_{\mathbb{R}^{2}} |K_{g}| d \mu_{g} \leq C $. \\
	($\mathscr{M}2$). The image of the corresponding Gauss map lies in $\{ \theta \in \mathbb{S}^{2}: d_{\mathbb{S}^{2}}(\theta,\theta_{0}) \leq \alpha<\frac{\pi}{2}  \}$.
\end{theorem}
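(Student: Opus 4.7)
Suppose for contradiction that such an $f$ exists. The strategy is to blow $f$ down at infinity to obtain a Willmore cone, and then contradict the length lower bound from the cone classification via Gauss--Bonnet. First, $(\mathscr{M}2)$ gives $\nu\cdot\theta_{0} \ge \cos\alpha > 0$ everywhere, so $f(\mathbb{R}^{2})$ is transverse to $\theta_{0}^{\perp}$; completeness then forces the orthogonal projection to be surjective, and $f(\mathbb{R}^{2})$ is an entire graph $z = u(x,y)$ over $\theta_{0}^{\perp}\cong\mathbb{R}^{2}$ with $|\nabla u| \le \tan\alpha$. Next, Gauss--Bonnet on the intrinsic geodesic disks $B^{g}_{R}(p)$ yields
\begin{equation*}
\int_{B^{g}_{R}(p)} K_{g}\, d\mu_{g} + \int_{\partial B^{g}_{R}(p)} \kappa_{g}\, ds \;=\; 2\pi,
\end{equation*}
and by $(\mathscr{M}1)$ the first term converges as $R\to\infty$ to $\Theta := \int_{\mathbb{R}^{2}} K_{g}\, d\mu_{g} \in (0, 2\pi]$, so $\int_{\partial B^{g}_{R}(p)} \kappa_{g}\, ds \to 2\pi - \Theta \in [0, 2\pi)$.

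Now perform a blow-down: the rescaled graphs $u_{k}(x,y) := k^{-1}u(kx,ky)$ are again entire Willmore graphs with $|\nabla u_{k}| \le \tan\alpha$, and $\int |K_{u_{k}}|\, d\mu_{u_{k}} \le C$ is preserved by the scale invariance of the total absolute Gaussian curvature. A standard $\varepsilon$-regularity argument for Willmore immersions, combined with the uniform $L^{1}$ bound on $K$ (to localize curvature concentration) and the slope bound (to prevent degeneration), gives $C^{k}_{\mathrm{loc}}$ convergence along a subsequence on $\mathbb{R}^{2}\setminus\{0\}$ to some $u_{\infty}$. The scaling identity $u_{k}(\lambda x) = \lambda\, u_{k\lambda}(x)$, after a further diagonal extraction, promotes $u_{\infty}$ to a $1$-homogeneous entire Willmore function, whose graph is a Willmore cone $\mathscr{C}_{\gamma}$ with link $\gamma\subset \mathbb{S}^{2}$ embedded and closed (inherited from the embeddedness and graphical nature of $f$).

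Finally, the intrinsic metric on $\mathscr{C}_{\gamma}\setminus\{0\}$ is the flat cone $dr^{2} + r^{2}\,ds^{2}$ of total cone angle $L(\gamma)$, so passing the Gauss--Bonnet computation through the blow-down identifies
\begin{equation*}
L(\gamma) \;=\; \lim_{R\to\infty} \int_{\partial B^{g}_{R}(p)} \kappa_{g}\, ds \;=\; 2\pi - \Theta \;<\; 2\pi.
\end{equation*}
However, the classification of embedded closed Willmore cones recalled earlier in the paper forces $L(\gamma) \ge 2\pi$, with equality only when $\gamma$ is a great circle, i.e.\ $\mathscr{C}_{\gamma}$ is a plane (whence $\Theta = 0$). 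Both alternatives contradict $L(\gamma) < 2\pi$ together with $\Theta > 0$, so no such $f$ exists.

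\medskip

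\noindent\textbf{Main obstacle.} The delicate step is the blow-down: one must ensure smooth subsequential convergence of $u_{k}$ to a $1$-homogeneous Willmore limit whose link is a single embedded closed curve in $\mathbb{S}^{2}$, so that the cone classification applies cleanly. The two hypotheses play essential and distinct roles here: the uniform $L^{1}$ bound on $K$ from $(\mathscr{M}1)$ controls where curvature can concentrate during the rescaling, while the slope bound from $(\mathscr{M}2)$ rules out vertical bubbling and keeps every $u_{k}$ in a single graphical chart. Carrying out the compactness/$\varepsilon$-regularity package carefully, together with separately verifying that the limit link is not a degenerate or multiply covered great circle, is the main technical input one needs to supply.
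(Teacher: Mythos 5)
Your overall strategy---blow down at infinity, extract a limit, and contradict the length bound for embedded closed Willmore cone links via a Gauss--Bonnet count of the total curvature---is the same as the paper's. But there is a genuine gap at the central step: the claim that the scaling identity $u_{k}(\lambda x)=\lambda u_{k\lambda}(x)$ plus a diagonal extraction ``promotes $u_{\infty}$ to a $1$-homogeneous'' limit. Unlike the minimal surface case, there is no monotonicity formula for Willmore surfaces forcing blow-down limits to be dilation-invariant; a diagonal argument only shows that \emph{some} sequence of scales has a limit, not that the limits at different scales coincide, so homogeneity of $u_{\infty}$ does not follow. The paper never asserts the limit is a cone a priori. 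Instead it shows the limit is \emph{flat} (the total absolute curvature of the annuli $D_{2Rr_{k}}\setminus D_{r_{k}/2R}$ tends to zero, so $K_{g_{\infty}}\equiv 0$), computes the limiting conformal factor to be $e^{2u_{\infty}}=e^{2c_{2}}|x|^{2c_{1}}$ with $c_{1}\in[-1,0]$ tied to $\int K_{g}\,d\mu_{g}=-2\pi c_{1}$, and then invokes the Hartman--Nirenberg ruling structure (Lemma \ref{lemma rank 1 implies line}, Corollary \ref{cor rank 1 implies line}) to force the image to be either an extrinsic cone or a cone glued to a half-cylinder (Case 2 of Step 4); only then does Theorem \ref{theorem countable embedded Willmore cones} apply. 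Your proposal is missing both the flatness-plus-ruling argument that replaces monotonicity and the mixed cone/cylinder alternative.

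A second, smaller omission: your range $L(\gamma)=2\pi-\Theta\in[0,2\pi)$ includes the degenerate endpoint $\Theta=2\pi$ (i.e.\ $c_{1}=-1$), where the intrinsic limit is $S^{1}\times\mathbb{R}$ and there is no cone at all to which the length bound applies. The paper treats this case separately: the limit is then a complete non-flat Willmore surface of zero Gaussian curvature, and Theorem \ref{theorem intro} shows its Gauss image is a great semi-circle, contradicting $(\mathscr{M}2)$. You would need to add this branch. Finally, note that the ``standard $\varepsilon$-regularity package'' you cite is itself not routine here: ruling out curvature concentration during the blow-down is done in the paper by a second rescaling whose bubble limit would be a complete flat Willmore plane, again via Theorem \ref{theorem intro}, so the classification theorem enters the compactness argument and not only the endgame.
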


The strategy of the proof is as follows.  By contradiction, we assume there exists such an embedding $f:\mathbb{R}^{2} \rightarrow \mathbb{R}^{3}$, then an intrinsic blow-down sequence of $f$ will converge to a limit $f_{\infty}$ smoothly on $\mathbb{R}^{2} \setminus \{0\}$. With the help of the results in \cite{Hartman1959OnSI}, we can analyse the image of $f_{\infty}$ . Using Theorem \ref{theorem intro} and the properties of  Willmore cones, we can conclude that $\displaystyle \int_{\mathbb{R}^{2}} K_{g} d \mu_{g}=0$, which leads to a contradiction. 

The following Bernstein-type theorem follows from Cohn-Vossen theorem and Theorem \ref{theorem berstein-type main theorem} immediately. 

\begin{theorem}
	\label{theorem berstein-type 2}
	Let $f: \mathbb{R}^{2} \rightarrow \mathbb{R}^{3}$ be a complete  Willmore embedding. If $K_{g} \geq 0$ and the image of the corresponding Gauss map lies in $\{ \theta \in \mathbb{S}^{2}: d_{\mathbb{S}^{2}}(\theta,\theta_{0}) \leq \alpha<\frac{\pi}{2}  \}$, $f(\mathbb{R}^{2})$ is a plane. 
\end{theorem}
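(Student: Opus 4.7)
The plan is to derive this as a short corollary of Theorem \ref{theorem berstein-type main theorem} and the classification in Theorem \ref{theorem intro}, with Cohn--Vossen's inequality as the bridge. I would argue by contradiction, supposing that $f(\mathbb{R}^{2})$ is not a plane, and split into two cases according to whether the Gaussian curvature vanishes identically on $\mathbb{R}^{2}$.

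In the first case, if $K_{g}\equiv 0$, then $(\mathbb{R}^{2},g)$ is a simply-connected, complete, non-flat Willmore surface of zero Gaussian curvature, so Theorem \ref{theorem intro}(2) supplies $C>0$ and an isometry $\mathscr{I}_{C}$ of $\mathbb{R}^{3}$ with $f(\mathbb{R}^{2})=\mathscr{I}_{C}(\Sigma_{C})$. By Theorem \ref{theorem intro}(1) the Gauss map of $\Sigma_{C}$ sweeps out a great semi-circle in $\mathbb{S}^{2}$, and since the differential of $\mathscr{I}_{C}$ is an orthogonal transformation, the Gauss map image of $f(\mathbb{R}^{2})$ is a rigid rotation of that semi-circle. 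A great semi-circle has antipodal endpoints, hence intrinsic diameter $\pi$, and therefore cannot be contained in any geodesic disk of radius $\alpha<\frac{\pi}{2}$; this contradicts hypothesis ($\mathscr{M}2$). So the flat case forces $f(\mathbb{R}^{2})$ to be a plane.

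If instead $K_{g}\not\equiv 0$, then since $K_{g}\ge 0$ is continuous and strictly positive somewhere, $\int_{\mathbb{R}^{2}} K_{g}\, d\mu_{g}>0$. The negative part $K_{g}^{-}$ vanishes identically, so Cohn--Vossen's inequality applies to the complete, non-compact surface $(\mathbb{R}^{2},g)$ and yields $\int_{\mathbb{R}^{2}} K_{g}\, d\mu_{g}\le 2\pi\chi(\mathbb{R}^{2})=2\pi$; consequently $\int_{\mathbb{R}^{2}} |K_{g}|\, d\mu_{g}=\int_{\mathbb{R}^{2}} K_{g}\, d\mu_{g}\le 2\pi$. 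Taking $C=2\pi$, hypothesis ($\mathscr{M}1$) of Theorem \ref{theorem berstein-type main theorem} is satisfied; combined with the given ($\mathscr{M}2$), Theorem \ref{theorem berstein-type main theorem} rules out the existence of such an $f$, contradicting our standing assumption.

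The only genuinely new content beyond the cited results is the elementary spherical-geometry observation that a great semi-circle has diameter $\pi$ and hence cannot lie in a geodesic disk of radius less than $\frac{\pi}{2}$, which is immediate from its antipodal endpoints. I therefore anticipate no real technical obstacle; all substantive work has already been absorbed into Theorem \ref{theorem intro} and Theorem \ref{theorem berstein-type main theorem}, and the remaining argument is essentially bookkeeping plus one invocation of Cohn--Vossen.
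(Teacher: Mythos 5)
Your proposal is correct and follows essentially the same route as the paper: Cohn--Vossen plus Theorem \ref{theorem berstein-type main theorem} force $\int_{\mathbb{R}^{2}}K_{g}\,d\mu_{g}=0$, hence $K_{g}\equiv 0$, and then Theorem \ref{theorem intro} together with hypothesis ($\mathscr{M}2$) rules out the non-planar cylinders $\Sigma_{C}$. The only difference is presentational (a case split instead of a linear chain), and your explicit remark that a great semi-circle cannot lie in a geodesic disk of radius $\alpha<\frac{\pi}{2}$ merely spells out what the paper leaves implicit in the phrase ``by Theorem \ref{theorem intro} and the assumption.''
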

\begin{proof}
	Since $K_{g} \geq 0$, by Cohn-Vossen theorem, $\displaystyle \int_{\mathbb{R}^{2}} K_{g} d \mu_{g} \leq 2 \pi$. By Theorem \ref{theorem berstein-type main theorem}, $\displaystyle \int_{\mathbb{R}^{2}} K_{g} d \mu_{g} =0$, which implies $K_{g} \equiv 0$. By Theorem \ref{theorem intro} and the assumption, we conclude that $f(\mathbb{R}^{2})$ is a plane. 
\end{proof}

The organization of this article is as follows. In section 2, we will study Willmore surfaces with vanishing Gaussian curvature and  prove Theorem \ref{theorem intro}. 
In section 3, we will analyse and classify the Willmore cones. In section 4, we will prove Theorem \ref{theorem berstein-type main theorem}. Appendix contains a key lemma in \cite{Hartman1959OnSI}, we give the proof of a 2-dimensional case. 

\section{Nontrivial Willmore surfaces with vanishing Gaussian curvature}
Throughout this section, we assume $\Sigma$ is a simply-connected, complete Willmore surface with vanishing Gaussian curvature. 

The following theorem, which is proved in \cite{Hartman1959OnSI} and \cite{Massey1962SurfacesOG}, shows a splitting property of a complete surface with vanishing  Gaussian curvature in $\mathbb{R}^{3}$. 

\begin{theorem}	
	\label{theorem complete surface in R3 is a cylinder}
	A complete surface of Gaussian curvature zero in $\mathbb{R}^{3}$ is a``cylinder". More precisely, up to a  rotation, a ``cylinder" means the surface is generated by the set of lines parallel to the $y$-axis through a curve in the $xz$-plane.  
\end{theorem}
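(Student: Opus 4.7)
The plan is to follow the classical Hartman--Nirenberg strategy, based on the rank structure of the second fundamental form $\mathrm{II}$ when $K_g \equiv 0$.

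First I would split $\Sigma$ into the open ``ruled'' set $U = \{p \in \Sigma : \mathrm{II}_p \neq 0\}$ and its planar complement. At every $p \in U$ the shape operator has rank exactly one because $\det(\mathrm{II}) = K_g \det(g) = 0$, so there is a unique unit direction $e(p) \in T_p \Sigma$ in its kernel, depending smoothly on $p$. Using the Codazzi equations I would verify that the integral curves of $e$ are geodesics of $\Sigma$ that are, moreover, straight lines in $\mathbb{R}^3$, and that the Gauss normal $\nu$ is constant along each such line since $d\nu(e) = 0$. This is the standard ``flat surfaces are ruled, with the Gauss map constant along rulings'' statement.

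Second, I would extend each ruling to a complete affine line contained in $\Sigma$. Inside $U$ this is immediate from ODE uniqueness; the delicate case is when a ruling approaches $\partial U \cap \Sigma$, where $\mathrm{II}$ vanishes. At such a limit point the surface is locally planar, and continuity of $\nu$ forces the adjacent planar piece to lie in the same affine plane as the limiting ruling, so the ruling continues as a straight line through the planar region. Using completeness of $\Sigma$ and iterating this continuation, I obtain a full affine line of $\mathbb{R}^3$ sitting inside $\Sigma$ through every point of $U$.

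The main obstacle, and the content of the key lemma cited from \cite{Hartman1959OnSI}, is to show that all of these rulings are mutually parallel, which is precisely what distinguishes a cylinder from a general flat ruled surface such as a cone or a tangent developable. For this I would argue that if two nearby rulings $L_1, L_2 \subset U$ were not parallel, then, being complete lines in $\mathbb{R}^3$, they would intersect, forcing a contradiction with the smooth rank-one structure of $\mathrm{II}$ (or equivalently forcing $\nu$ to take two distinct values at the intersection point, contradicting constancy along each $L_i$). Completeness is essential here: tangent developables and cones are flat ruled surfaces that are non-cylindrical precisely because they are incomplete. Once all rulings are shown to share a common direction $v \in \mathbb{S}^2$, I choose Euclidean coordinates with $v$ parallel to the $y$-axis, and then $\Sigma$ is exactly the union of $y$-axis-parallel lines through its orthogonal projection onto the $xz$-plane, which is the desired cylinder structure.
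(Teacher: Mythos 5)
You should first note that the paper does not actually prove this theorem: it is quoted from \cite{Hartman1959OnSI} and \cite{Massey1962SurfacesOG}, and only the analytic core (Lemma 2 of Hartman--Nirenberg, reproved in the Appendix as Lemma \ref{lemma rank 1 implies line}) is established in the text. So your sketch has to be measured against the classical proofs. Its architecture --- splitting off the planar set, taking the kernel line field of the rank-one shape operator, showing the rulings are straight lines along which $\nu$ is constant, and extending them by completeness --- is the standard Massey-style route and is fine as far as it goes.

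The genuine gap is in the step you yourself identify as the crux. The claim that two non-parallel complete lines in $\mathbb{R}^{3}$ must intersect is false: they can be skew, and on a tangent developable two distinct rulings generically do not meet. The mechanism that actually forces parallelism is a focal-point argument. Writing a piece of the ruled set as $\sigma(s,t)=\gamma(s)+t\delta(s)$ with $\gamma$ orthogonal to the rulings and $|\delta|=1$, one checks (via Codazzi/developability) that $\delta'(s)$ is proportional to $\gamma'(s)$, say $\delta'=c\,\gamma'$, and that the nonzero principal curvature along the ruling equals a fixed quantity divided by $1+tc(s)$; if $c(s_{0})\neq 0$ this blows up at the finite parameter $t_{0}=-1/c(s_{0})$, and completeness puts the point $\sigma(s_{0},t_{0})$ on the $C^{2}$ surface, a contradiction. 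This is precisely what Lemma \ref{lemma rank 1 implies line} encodes analytically ($r=1$ propagates along the whole line, and the lines of constancy of $\nabla f$ cannot focus). Your fallback argument is also insufficient: even if two rulings did meet, $\nu$ could take the same value on both at the meeting point, and if that point lies in the planar set there is no rank-one structure to contradict. A secondary omission: after obtaining parallelism within each connected component of the ruled set, you still need each planar component to be a strip bounded by parallel full lines so that the ruling direction matches across it; continuity of $\nu$ lets a single ruling continue through a planar piece but does not by itself yield this global matching.
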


By Theorem \ref*{theorem complete surface in R3 is a cylinder}, $\Sigma$ can be locally written as 
$\displaystyle 
\{ (x,y,u(x)): x \in \mathcal{I}\subset \mathbb{R},  y \in \mathbb{R} \}.
$
From \cite{DD06}, 
$u$ satisfies the following Willmore graphic equation
\begin{align}
\label{Graphic Willmore surface equation}
div \big(   \frac{1}{v} \big( \big(Id-\frac{\nabla u \otimes \nabla u}{v^{2}}    \big)   \nabla (vH)  -\frac{1}{2} H^{2} \nabla u     \big)                \big)=0,
\end{align}
where $\nabla$ is the Euclidean gradient operator, $\displaystyle v=\sqrt{1+|\nabla u|^{2}}$ and $\displaystyle  H=div(\frac{\nabla u}{v} )$.

By standard calculations, we could obtain the following ODE equation. 
\begin{prop}
	\label{prop ode of the solution}
	If $u=u(x)$ solves the graphic Willmore surface equation  (\ref{Graphic Willmore surface equation}), then 
\begin{align}
\label{1-dim Graphic Willmore surface equation}
	\frac{d}{dx}(      \frac{     w^{\prime \prime} ( 1+w^{2}) -\frac{5}{2} w( w^{\prime}  )^{2}          }{    (1+w^{2})^{\frac{7}{2}}              }               )=0,
	\end{align}
	where $w=u^{\prime}$. 
\end{prop}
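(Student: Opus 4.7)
The plan is to reduce the 2D Willmore graphic equation to a 1D ODE by exploiting the translation symmetry in $y$: since $u=u(x)$, every function occurring in (\ref{Graphic Willmore surface equation}) depends on $x$ alone, and the vector field inside the divergence will have a trivially vanishing $y$-component. Concretely, I would compute each ingredient of (\ref{Graphic Willmore surface equation}) in terms of $w=u'$ and then read off the divergence as an ordinary derivative.

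First I would substitute $\nabla u=(w,0)$, so $v=\sqrt{1+w^2}$ and $\dfrac{\nabla u}{v}=\bigl(\dfrac{w}{\sqrt{1+w^2}},0\bigr)$. Taking the Euclidean divergence gives
\begin{equation*}
H=\frac{d}{dx}\!\left(\frac{w}{\sqrt{1+w^2}}\right)=\frac{w'}{(1+w^2)^{3/2}},\qquad vH=\frac{w'}{1+w^2}.
\end{equation*}
Next I would compute the matrix $Id-\dfrac{\nabla u\otimes\nabla u}{v^2}=\mathrm{diag}\bigl(\tfrac{1}{1+w^2},1\bigr)$ and observe that $\nabla(vH)$ has only an $x$-component, so
\begin{equation*}
\Bigl(Id-\tfrac{\nabla u\otimes\nabla u}{v^2}\Bigr)\nabla(vH)=\left(\frac{1}{1+w^2}\frac{d}{dx}\frac{w'}{1+w^2},\;0\right),\qquad \tfrac12 H^2\nabla u=\left(\frac{w(w')^2}{2(1+w^2)^3},\;0\right).
\end{equation*}

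Combining these, the vector field whose divergence appears in (\ref{Graphic Willmore surface equation}) has vanishing $y$-component, and its $x$-component equals
\begin{equation*}
\frac{1}{\sqrt{1+w^2}}\left[\frac{w''(1+w^2)-2w(w')^2}{(1+w^2)^3}-\frac{w(w')^2}{2(1+w^2)^3}\right]=\frac{w''(1+w^2)-\tfrac{5}{2}w(w')^2}{(1+w^2)^{7/2}},
\end{equation*}
after carrying out the differentiation $\frac{d}{dx}\frac{w'}{1+w^2}=\frac{w''(1+w^2)-2w(w')^2}{(1+w^2)^2}$ and combining the two fractions with common denominator $(1+w^2)^{7/2}$. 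Since the entire vector field depends only on $x$, the 2D divergence collapses to a single $x$-derivative, yielding (\ref{1-dim Graphic Willmore surface equation}).

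There is no real obstacle here; the statement is a direct reduction and the only thing to watch is the bookkeeping of the coefficient $\tfrac{5}{2}$, which comes from combining the $-2$ from differentiating $w'/(1+w^2)$ with the additional $-\tfrac12$ coming from the $\tfrac12 H^2\nabla u$ term. Everything else is a routine check that the operator $Id-\nabla u\otimes\nabla u/v^2$ only rescales the $x$-component by $1/(1+w^2)$ and that no $y$-dependence is generated.
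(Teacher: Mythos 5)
Your proposal is correct and follows essentially the same route as the paper: substitute $\nabla u=(w,0)$, compute $H=w'/(1+w^2)^{3/2}$, observe the flux vector has only an $x$-component depending on $x$ alone, and collect terms to get the coefficient $-\tfrac{5}{2}$. The only cosmetic difference is that you simplify $vH=w'/(1+w^2)$ before differentiating, whereas the paper applies the product rule to $v_xH+vH_x$; the resulting computation is identical.
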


\begin{proof}
	Clearly, $v$ and $H$ are functions of $x$ since $u$ depends only on $x$. 
	By direct calculations, 
	\begin{align}
	\frac{dv}{dx}&=\frac{ u^{\prime} u^{\prime \prime}}{  \sqrt{1+(u^{\prime})^{2}}    }=\frac{ w w^{\prime}}{  \sqrt{1+w^{2}}  } \label{v'}, \\
	H&=div(  \frac{\nabla u}{v}     ) =\frac{d}{dx}\big( \frac{ u^{\prime}}{ \sqrt{ 1+(u^{\prime})^{2}  }  }     \big) \nonumber \\&=\frac{  u^{\prime \prime}  \sqrt{ 1+(u^{\prime})^{2}  } -  u^{\prime} \frac{u^{\prime} u^{\prime \prime}  }{ \sqrt{ 1+(u^{\prime})^{2}  }   }    }{  1+(u^{\prime})^{2}  }    =\frac{ w^{\prime}}{ (1+w^{2})^{\frac{3}{2}}    }, \label{H}\\
	\frac{dH}{dx}&=\frac{  w^{\prime \prime}  (1+w^{2})^{\frac{3}{2}} -w^{\prime} \cdot \frac{3}{2} (1+w^{2})^{\frac{1}{2}} \cdot 2 w w^{\prime}     }{ (1+w^{2})^{3}   } \nonumber\\&=\frac{ w^{\prime \prime} (1+w^{2})-3(w^{\prime})^{2} w}{ (1+w^{2})^{\frac{5}{2}}       } \label{H'}.
\end{align}

Then
\begin{align*}
&(Id-\frac{\nabla u \otimes \nabla u}{v^{2}}    )   \nabla (vH)  -\frac{1}{2} H^{2} \nabla u   \\
=& (  1-\frac{ (u^{\prime})^{2}}{v^{2}} ) ( vH)_{x}  \frac{\partial}{\partial x}   +(  vH         )_{y} \frac{\partial}{\partial y} -\frac{1}{2} H^{2} u^{\prime} \frac{\partial}{\partial x} \\
=& \big\{\frac{1}{1+w^{2}} \big[ \frac{ w w^{\prime}}{  \sqrt{1+w^{2}}  } \cdot    \frac{ w^{\prime}}{ (1+w^{2})^{\frac{3}{2}}    } +\sqrt{1+w^{2}} \cdot  \frac{ w^{\prime \prime} (1+w^{2})-3(w^{\prime})^{2} w}{ (1+w^{2})^{\frac{5}{2}}       }            \big] \\
&-\frac{1}{2} (  \frac{ w^{\prime}}{ (1+w^{2})^{\frac{3}{2}}    }         )^{2} w \big\} \frac{\partial}{\partial x} \\
=& \frac{     w^{\prime \prime} ( 1+w^{2}) -\frac{5}{2} w( w^{\prime}  )^{2}          }{    (1+w^{2})^{3}              } \frac{\partial}{\partial x} .
\end{align*}
Combining all the above, we have
\begin{align*}
	&div \big(   \frac{1}{v} \big( \big(Id-\frac{\nabla u \otimes \nabla u}{v^{2}}    \big)   \nabla (vH)  -\frac{1}{2} H^{2} \nabla u     \big)                \big)\\
	=& \frac{d}{dx}(      \frac{     w^{\prime \prime} ( 1+w^{2}) -\frac{5}{2} w( w^{\prime}  )^{2}          }{    (1+w^{2})^{\frac{7}{2}}              }               )=0.
\end{align*}

By integration, there exists a  constant C such that
\begin{align}
\label{ODE 2}
w^{\prime \prime} ( 1+w^{2}) -\frac{5}{2} w( w^{\prime}  )^{2}      =C  (1+w^{2})^{\frac{7}{2}}       .
\end{align}

Observing that
$$
\frac{d}{dx} [( 1+w^{2}    )^{-\frac{5}{4}} w^{\prime}  ]=(1+w^{2})^{-\frac{9}{4}} [    (1+w^{2}) w^{\prime \prime} -\frac{5}{2} w (w^{\prime})^{2}             ],
$$
we can also write (\ref{ODE 2}) as
\begin{align}
\label{ODE 3}
\frac{d}{dx} [( 1+w^{2}    )^{-\frac{5}{4}} w^{\prime}  ]=C (1+w^{2})^{\frac{5}{4}} .
\end{align}

\end{proof}

	The following Bernstein type assertion has been proved in \cite[Appendix]{CL131} 
	\begin{prop}
		\label{prop no entire solution}
		If the corresponding solution $w=w(C)$ of (\ref{ODE 2}) is an entire solution, $w$ is constant. 
	\end{prop}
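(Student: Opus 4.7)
The plan is to produce a first integral of (\ref{ODE 3}) and then rule out global solutions by a separation-of-variables argument. Setting $F = (1+w^2)^{-5/4}w'$, equation (\ref{ODE 3}) reads $F' = C(1+w^2)^{5/4}$. Multiplying both sides by $F$ gives $FF' = Cw'$, which integrates directly to
\[
\frac{(w')^2}{(1+w^2)^{5/2}} = 2Cw + D \qquad (\star)
\]
for some constant $D$ determined by the initial data. Nonnegativity of the left-hand side already constrains $w$: when $C \ne 0$, $w$ is bounded on one side, and when $C = 0$ we must have $D \ge 0$.

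On any open interval on which $w' \ne 0$, the first integral $(\star)$ separates variables as
\[
dx = \pm\,\frac{dw}{(1+w^2)^{5/4}\sqrt{2Cw + D}}.
\]
The key observation is that the right-hand side is integrable at every endpoint of the range of $w$: at a zero of $2Cw + D$ the integrand has only a square-root singularity, while as $|w| \to \infty$ it decays like $|w|^{-3}$ when $C \ne 0$ and like $|w|^{-5/2}$ when $C = 0$ with $D > 0$. Thus $x$ can sweep out only a bounded interval as $w$ traverses its maximal range, contradicting the assumption that $w$ is defined on all of $\mathbb{R}$. The only escape is $w' \equiv 0$, and then (\ref{ODE 2}) forces $C = 0$, so $w$ is constant.

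The main technical point is to verify that the separation argument applies globally. At any critical point $x_0$, equation $(\star)$ forces $2Cw(x_0) + D = 0$, and (\ref{ODE 2}) then gives $w''(x_0) = C\bigl(1+w(x_0)^2\bigr)^{5/2}$. When $C > 0$ (the case $C < 0$ follows from the symmetry $w \mapsto -w$, $C \mapsto -C$), $x_0$ is a strict local minimum with value $w_0 := -D/(2C)$; moreover $(\star)$ forbids $w$ from descending back to $w_0$ away from $x_0$, so $w$ is strictly increasing on the whole interval $(x_0, \infty)$, and the separation integral gives a finite upper bound for $x - x_0$. In the degenerate case $C = 0$, $D > 0$, there are no critical points, and the bounded function $G(w) := \int_0^w (1+s^2)^{-5/4}\,ds$ satisfies $G(w(x)) = \pm\sqrt{D}\,x + \mathrm{const}$ by $(\star)$, which is impossible since the left-hand side is bounded while the right-hand side is not. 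Only $C = D = 0$ survives, and the proposition follows.
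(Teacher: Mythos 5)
Your argument is correct, but note that the paper does not actually prove Proposition \ref{prop no entire solution}: it is quoted from the appendix of \cite{CL131}, so there is no in-paper proof to match. What you supply is a clean, self-contained replacement. Your first integral is exactly right: with $F=(1+w^{2})^{-5/4}w^{\prime}$, equation (\ref{ODE 3}) gives $FF^{\prime}=Cw^{\prime}$, hence $(w^{\prime})^{2}(1+w^{2})^{-5/2}=2Cw+D$, and the two convergence claims that drive the separation argument both check out (the integrand behaves like $(w-w_{0})^{-1/2}$ at the zero $w_{0}=-D/(2C)$ of $2Cw+D$ and like $|w|^{-3}$ at infinity when $C\neq 0$, like $|w|^{-5/2}$ when $C=0$, $D>0$). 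The one step worth tightening is the claim that $w$ is strictly increasing on all of $(x_{0},\infty)$ after the unique critical point: the quickest justification is that if $x_{1}=\inf\{x>x_{0}:w^{\prime}(x)=0\}$ were finite, then $w^{\prime}>0$ on $(x_{0},x_{1})$ forces $w(x_{1})>w_{0}$, while $(\star)$ at $x_{1}$ forces $w(x_{1})=w_{0}$, a contradiction; your phrase ``$(\star)$ forbids $w$ from descending back to $w_{0}$'' is this argument in compressed form. You also correctly dispose of the degenerate branch $2Cw+D\equiv 0$ via $w^{\prime}\equiv 0$ and (\ref{ODE 2}). Compared with the cited source, which reduces to the elastica-type equation for the curvature of the cross-section curve, your proof stays entirely at the level of the quantity $(1+w^{2})^{-5/4}w^{\prime}$ that the paper already introduces in (\ref{ODE 3}), which makes it a natural companion to Proposition \ref{prop ode of the solution} and keeps the paper self-contained.
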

    
Now we come to prove Theorem \ref{theorem intro}. 
 \begin{proof}
 	\textbf{Step 1:} 
 		Since $\Sigma$ is simply-connected and $K=0$, $\Sigma$ is isometric to $\mathbb{R}^{2}$. Therefore, we can view $H$ as a function from $\mathbb{R}^{2}$ to $\mathbb{R}$ which satisfies $\displaystyle \Delta_{\mathbb{R}^{2}} H=-\frac{1}{2} H^{3}$. 
 		
 		If $H(p)>0$ for $\forall p \in \mathbb{R}^{2}$, $-H$ is a subharmonic function which is bounded above, hence is constant by the Liouville theorem, which leads to $\Delta_{\mathbb{R}^{2}} H \equiv 0$. Then $H \equiv 0$ and we obtain a contradiction. 
 		If $H(p)<0$ for $ \forall p \in \mathbb{R}^{2}$, then $H$ is a subharmonic function which is bounded above, hence is constant again  which leads to $H \equiv 0$, a contradiction to  the equation assumption. We conclude that there exists $p \in \Sigma$ such that $H(p)=0$. 
 		
 	Therefore, we can write $\Sigma$ locally  as 
 	$
 	\{ (x,y,u(x)): x \in \mathcal{I}, y \in \mathbb{R}  \}
 	$ 
 	under some appropriate coordinate system such that: $w=u^{\prime}$ satisfies (\ref{ODE 2}), 
 	the projection of $p$ on the $xz$-plane is $(0,u(0))$, $u(0)=0$, $u^{\prime}(0)=w(0)=0$, where $\mathcal{I}$ is the maximal existence interval of (\ref{ODE 2}). By (\ref{H}), we also have $w^{\prime}(0)=0$. If $C=0$, we will obtain $u \equiv 0$ and $\Sigma $ is a plane. 
 	
 		\textbf{Step 2:} 
 	Now we assume $C>0$.  By (\ref{H'}), 
 		\begin{align}
 	\label{NE1}
 	|\nabla_{\Sigma} H|^{2}|_{( x,y,u(x)   )    }=[(1+w^{2})^{-1}( \frac{ w^{\prime \prime} (1+w^{2})-3(w^{\prime})^{2} w}{ (1+w^{2})^{\frac{5}{2}}       }     )^{2}](x), \quad \forall x \in \mathcal{I}.
 		\end{align}
 	In particular, $|\nabla_{\Sigma} H|^{2}( p     )=(w^{\prime \prime}(0))^{2}=C^{2}$.
 	
 	By the uniqueness of the solution and (\ref{ODE 2}), $w$ is even and $u$ is odd on $\mathcal{I}$. By Proposition \ref{prop no entire solution}, we know that $\mathcal{I}=(-\rho,\rho) \text{ or } [-\rho, \rho]$, where $\rho \in (0,+\infty)$. If $\mathcal{I}=(-\rho,\rho)$, then,  
 	$\displaystyle 
 	\lim_{x \rightarrow \rho-0} u(x) = +\infty, \lim_{x \rightarrow -\rho+0} u(x) = -\infty. 
  	$
  	After taking a coordinate transformation
  	$$
  	\hat{x}=x \cos \alpha  +z \sin \alpha, \quad \hat{z} =-x \sin \alpha+z \cos \alpha, 
  	$$
  	where $0<\frac{\pi}{2}-\alpha$ is sufficiently small, we will obtain a nontrivial entire solution of (\ref{ODE 2}) in new coordinates, a contradiction to Proposition \ref{prop no entire solution}. Therefore, we obtain
  	$$
  	u(\rho )=\lim_{x \rightarrow \rho-0} u(x) < +\infty, \quad u(-\rho )=\lim_{x \rightarrow -\rho+0} u(x) > -\infty, \quad \mathcal{I}=[-\rho,\rho].
  	$$
  	
  	Set $\displaystyle \xi=u(\rho) $. 	Integrate both sides of (\ref{ODE 3}), 
  	\begin{align}
  	\label{NE2}
  	w^{\prime}(x)=C ( 1+w^{2} (x)   )^{\frac{5}{4}} \int_{0}^{x} (1+w^{2}(t) )^{\frac{5}{4}} dt, \quad \forall x \in \mathcal{I}.
  	\end{align}
  	
  	By (\ref{NE2}), $\displaystyle \lim_{x \rightarrow \rho-0} w^{\prime}(x)=+\infty$ if and only if $\displaystyle \lim_{x \rightarrow \rho-0} w(x)=+\infty$. By the maximality of $\mathcal{I}$, $\displaystyle \lim_{x \rightarrow \rho-0} w^{\prime}(x)=+\infty.$ 
  	By (\ref{NE2}) again, 
  	\begin{align}
  	\label{NE3}
  	w^{\prime}(x)=\frac{C}{2}( [   \int_{0}^{x} (1+w^{2}(t) )^{\frac{5}{4}} dt             ]^{2} )^{\prime}, 
\end{align}  	
then 
\begin{align}
\label{NE4}
w(x)=\frac{C}{2} [   \int_{0}^{x} (1+w^{2}(t) )^{\frac{5}{4}} dt             ]^{2} .
\end{align}  	
By (\ref{H}), (\ref{NE2}) and (\ref{NE4}), for $x>0$, 
\begin{align}
H(x)&=\frac{ w^{\prime}(x)}{     (1+w^{2}(x))^{\frac{3}{2}}          }\nonumber =\frac{ C ( 1+w^{2} (x)   )^{\frac{5}{4}} \int_{0}^{x} (1+w^{2}(t) )^{\frac{5}{4}} dt}{   (1+w^{2}(x))^{\frac{3}{2}}         } \nonumber\\
&=\frac{ C \int_{0}^{x} (1+w^{2}(t) )^{\frac{5}{4}} dt}{  \sqrt{w(x)} (   1+\frac{1}{w^{2}(x)}       )^{\frac{1}{4}}      }=\frac{\sqrt{2C} }{ (1+\frac{1}{w^{2}(x)}       )^{\frac{1}{4}}           }. \label{NE5}
\end{align}
In particular, 
$\displaystyle 
H(\rho)=\lim_{x \rightarrow \rho} H(x)=\sqrt{2C}>0, \quad H(-\rho)=-H(\rho)=-\sqrt{2C}. 
$

By (\ref{ODE 2}), (\ref{NE1}) and (\ref{NE5}), 
\begin{align}
|\nabla_{\Sigma} H|^{2}((x,y,u(x) )) =& \frac{  [C(1+w^{2}(x))^{\frac{7}{2}} -\frac{1}{2} ( w^{\prime}(x)   )^{2} w(x)     ]^{2}   }{  (1+w^{2}(x))^{6}     } \nonumber \\= & \frac{[C(1+w^{2}(x))^{\frac{7}{2}} -\frac{1}{2} w(x) (      \frac{2C(1+w(x))^{3}}{  (1+\frac{1}{w^{2}(x)}       )^{\frac{1}{2}}          }        )]^{2}   }{  (1+w^{2}(x))^{6}     } =\frac{C^{2}}{ 1+w^{2}(x)   }. \label{NE6}
\end{align}

In particular, 
$\displaystyle 
|\nabla_{\Sigma} H|^{2}((\rho,y,u(\rho) )) =|\nabla_{\Sigma} H|^{2}((-\rho,y,u(-\rho) )) =\lim_{|x|\rightarrow \rho} \frac{C^{2}}{ 1+w^{2}(x)   }=0. 
$

	\textbf{Step 3:} 
Now we consider a new coordinate system
$\displaystyle 
\tilde{x}=z+\xi, \quad \tilde{z}=-x+\rho. 
$
Again, $\Sigma$ can be locally written as $\{ \tilde{x}, y, \tilde{u}( \tilde{x}  )  :\tilde{x} \in \tilde{\mathcal{I}}, y \in \mathbb{R}  \}$, where 
$\displaystyle 
\tilde{u}(0)=0, \quad \tilde{u}^{\prime}(0)=\lim_{x \rightarrow \rho} \frac{1}{ w(x)  }=0, 
$
and $\tilde{w}=\tilde{u}^{\prime}$ satisfies (\ref{ODE 3}) for some $\tilde{C}$. In this new coordinate system, 
$\displaystyle 
H((0,y,\tilde{u}(0)))=\sqrt{2C}, \quad |\nabla_{\Sigma}H|^{2}(  (0,y,\tilde{u}(0))   )=0, 
$
which implies $\tilde{C}=0$ and 
\begin{align*}
&\tilde{w}^{\prime \prime}( 1+ \tilde{w}^{2}  )=\frac{5}{2} \tilde{w} (  \tilde{w}^{\prime}   )^{2}, \quad \forall x \in \tilde{ \mathcal{I}}, \\
&\tilde{w}(0)=0, \quad \tilde{w}^{\prime}(0)=\sqrt{2C}. 
\end{align*}
By the uniqueness of solution, $\tilde{w}$ is odd, $\tilde{u}$ is even and $\tilde{I}=[-\xi,\xi]$. 

\begin{figure}[H]
	\centering
	\includegraphics[scale=0.4]{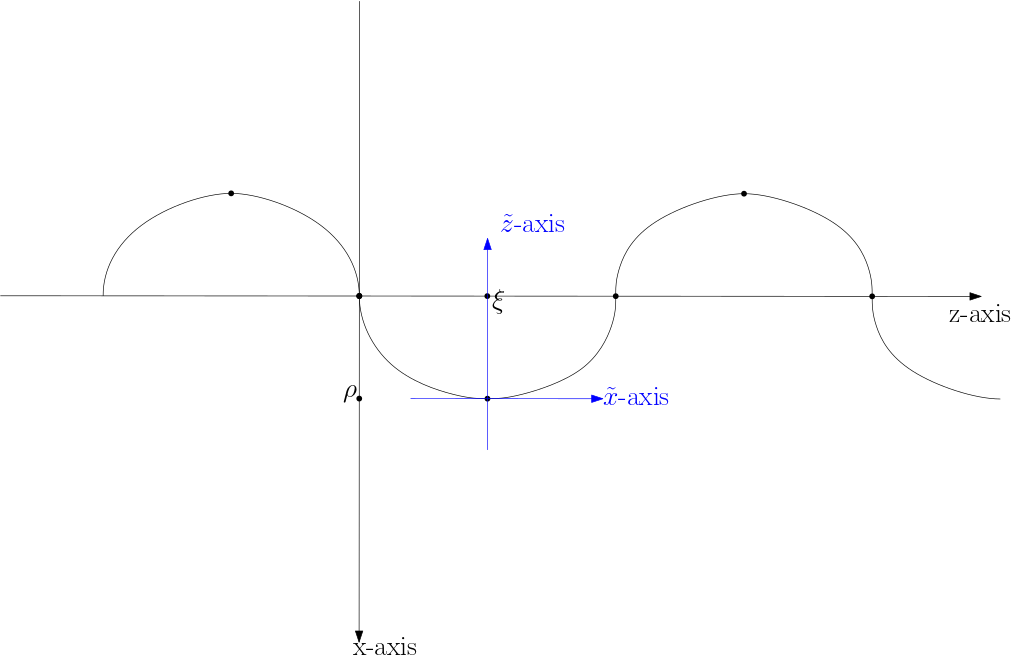}
	\caption{}
	\label{Figure 1}
\end{figure}

Repeating the above argument, we obtain a periodic Willmore surface with vanishing Gaussian curvature. Its projection on the $xz$-plane is shown in  Figure \ref{Figure 1}.

In conclusion, for any $C>0$, we can find a Willmore surface $\Sigma_{C}$ with vanishing Gaussian curvature. Clearly, there is no difference if we assume $C<0$ and $\Sigma_{-C}$ is isometric to $\Sigma_{C}$ for any $C>0$. On the other hand, given any simply-connected, complete Willmore surface $\Sigma$, $\Sigma$ is isometric to $\Sigma_{C}$ for some $C>0$. 

For any $C>0$, the image of $\Sigma_{C}$'s Gauss map lies in a great semi-circle of $\mathbb{S}^{2}$ (with Hausdorff dimension 1). 
\end{proof}

\section{Willmore cone}

In this section, we consider the Willmore cones defined as follows. 

\begin{definition}
	\label{defn Willmore cone}
	Assume $\gamma:\mathbb{R} \rightarrow \mathbb{S}^{2}$ is an immersion. Denote
	\begin{align*}
	\mathscr{C}_{\gamma}:=\{ r \gamma(s): r>0, |\gamma(s)|=1, |\gamma^{\prime}(s)|=1, s \in \mathbb{R} \},
	\end{align*}
	we say $\mathscr{C}_{\gamma}$ is a Willmore cone if 
	\begin{align*}
	\Delta_{\mathscr{C}_{\gamma} } H_{\mathscr{C}_{\gamma}}+\frac{1}{2} H_{\mathscr{C}_{\gamma}}^{3}-2H_{\mathscr{C}_{\gamma}} K_{\mathscr{C}_{\gamma}}=0,
	\end{align*}
	where $H_{\mathscr{C}_{\gamma}}$ and $K_{\mathscr{C}_{\gamma}}$ denote the mean curvature and Gaussian curvature of $\mathscr{C}_{\gamma}$, respectively.
	
	Clearly, $\mathscr{C}_{\gamma}$ is a Willmore cone if we take $\gamma$ a great circle of  $\mathbb{S}^{2}$. 
\end{definition}

\begin{prop}
	\label{prop Willmore cone ode}
	For a Willmore cone $\mathscr{C}_{\gamma}$, set $\displaystyle \mathcal{H}(s)=\gamma^{\prime \prime}(s) \cdot (\gamma(s) \times \gamma^{\prime}(s) ) $. Then
	\begin{align*}
	H_{\mathscr{C}_{\gamma}}( r\gamma(s) )=\frac{ \mathcal{H}(s)}{r}, \quad K_{\mathscr{C}_{\gamma}} \equiv 0, \quad \mathcal{H}^{\prime \prime}(s)=-\mathcal{H}(s)( 1+\frac{1}{2} \mathcal{H}^{2}(s)   ). 
	\end{align*}
\end{prop}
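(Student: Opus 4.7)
The plan is to exploit the explicit ruled structure of the cone by working in the parametrization $F(r,s)=r\gamma(s)$ for $r>0,\, s\in\mathbb{R}$. Using $|\gamma|=|\gamma'|=1$ and differentiating $|\gamma|^{2}=1$ to obtain $\gamma\cdot\gamma'=0$, the first fundamental form is diagonal with $g_{rr}=1,\ g_{rs}=0,\ g_{ss}=r^{2}$, so the induced metric is $dr^{2}+r^{2}\,ds^{2}$. This is the flat Euclidean polar metric, which already gives $K_{\mathscr{C}_{\gamma}}\equiv 0$ and shows the Laplace--Beltrami operator on $\mathscr{C}_{\gamma}$ coincides with the usual polar Laplacian $\partial_{r}^{2}+r^{-1}\partial_{r}+r^{-2}\partial_{s}^{2}$.

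Next I compute the mean curvature. Since $\gamma,\gamma'$ are orthonormal in $\mathbb{R}^{3}$, the field $N(s)=\gamma(s)\times\gamma'(s)$ is a smooth unit normal along $\mathscr{C}_{\gamma}$ (independent of $r$). The second fundamental form coefficients are
\begin{align*}
h_{rr}=F_{rr}\cdot N=0,\qquad
h_{rs}=F_{rs}\cdot N=\gamma'\cdot(\gamma\times\gamma')=0,\qquad
h_{ss}=F_{ss}\cdot N=r\,\gamma''\cdot(\gamma\times\gamma')=r\,\mathcal{H}(s).
\end{align*}
Tracing with the inverse metric gives $H_{\mathscr{C}_{\gamma}}(r\gamma(s))=g^{rr}h_{rr}+g^{ss}h_{ss}=\mathcal{H}(s)/r$, which is the second claim. (Geometrically, $\mathcal{H}(s)=\gamma''\cdot(\gamma\times\gamma')$ is precisely the geodesic curvature of $\gamma$ in $\mathbb{S}^{2}$, which makes the homogeneity $H\sim 1/r$ natural for a cone.)

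For the ODE, I apply the polar Laplacian to $H=\mathcal{H}(s)/r$:
\begin{align*}
\Delta_{\mathscr{C}_{\gamma}} H
= \partial_{r}^{2}\!\left(\tfrac{\mathcal{H}(s)}{r}\right)
+\tfrac{1}{r}\partial_{r}\!\left(\tfrac{\mathcal{H}(s)}{r}\right)
+\tfrac{1}{r^{2}}\partial_{s}^{2}\!\left(\tfrac{\mathcal{H}(s)}{r}\right)
=\frac{\mathcal{H}(s)+\mathcal{H}''(s)}{r^{3}}.
\end{align*}
Substituting this together with $H^{3}=\mathcal{H}^{3}(s)/r^{3}$ and $K_{\mathscr{C}_{\gamma}}=0$ into the Willmore equation $\Delta H+\tfrac12 H^{3}-2HK=0$ and clearing the common factor $r^{-3}$ yields
\begin{align*}
\mathcal{H}''(s)=-\mathcal{H}(s)\Bigl(1+\tfrac{1}{2}\mathcal{H}^{2}(s)\Bigr),
\end{align*}
as required.

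The calculation is essentially bookkeeping; the only small subtlety is fixing the sign of the normal so that $\mathcal{H}$ matches the stated formula $\gamma''\cdot(\gamma\times\gamma')$ (choosing the opposite orientation flips the sign of $H$ but leaves the cubic ODE invariant, since both sides are odd in $\mathcal{H}$). The conceptual content of the proof is the observation that the induced metric on a cone is flat polar, which both delivers $K\equiv 0$ for free and collapses $\Delta_{\mathscr{C}_{\gamma}}$ to a concrete operator that makes the Willmore PDE reduce to an ODE in $s$ alone.
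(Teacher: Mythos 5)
Your proof is correct and follows essentially the same route as the paper: the same parametrization $r\gamma(s)$, the same first and second fundamental form computations giving $K\equiv 0$ and $H=\mathcal{H}(s)/r$, and the same reduction of the Willmore equation via the polar Laplacian. The only cosmetic difference is that you obtain $K\equiv 0$ intrinsically from the flat polar metric, whereas the paper reads it off from $\mathcal{L}\mathcal{N}-\mathcal{M}^{2}=0$; both are immediate here.
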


\begin{proof}
For simplicity, we set
\begin{align*}
f(r,s)&=r\gamma(s), \quad \mathcal{E}=f_{r} \cdot f_{r}, \quad \mathcal{F}=f_{r} \cdot f_{s}, \quad \mathcal{G}=f_{s} \cdot f_{s}, \\
n&=\frac{ f_{r} \times f_{s}}{ | f_{r} \times  f_{s}| }, \quad \mathcal{L}=f_{rr} \cdot n, \quad \mathcal{M}=f_{rs} \cdot n, \quad \mathcal{N}=f_{ss} \cdot n.
\end{align*}

By direct calculations, we obtain
\begin{align*}
f_{r}&=\gamma(s), \quad f_{s}=r \gamma^{\prime}(s), \\
\mathcal{E}&=|\gamma(s)|^{2}=1, \quad \mathcal{F}=r\gamma(s) \cdot \gamma^{\prime}(s)=0,\quad \mathcal{G}=r^{2}|\gamma^{\prime}(s)|^{2}=r^{2},\\
n&=\frac{ r \gamma(s) \times \gamma^{\prime}(s) }{ | r \gamma(s) \times \gamma^{\prime}(s)|   }=\gamma(s) \times \gamma^{\prime}(s) ,\\
\mathcal{L}&=0,\quad \mathcal{M}=\gamma^{\prime}(s) \cdot (\gamma(s) \times \gamma^{\prime}(s)) =0, \quad \mathcal{N}= r\gamma^{\prime \prime}(s) \cdot (\gamma(s) \times \gamma^{\prime}(s) ), 
\end{align*}
then 
\begin{align}
\label{Willmore cone H}
K_{\mathscr{C}_{\gamma}}( r\gamma(s) )&=\frac{ \mathcal{L}\mathcal{N}-\mathcal{M}^{2}}{\mathcal{E}\mathcal{G}-\mathcal{F}^{2}}=0, \\H_{\mathscr{C}_{\gamma}}( r\gamma(s) )&= \frac{ \mathcal{L}\mathcal{G}-2\mathcal{M}\mathcal{F}+\mathcal{N}\mathcal{E}}{\mathcal{E}\mathcal{G}-\mathcal{F}^{2}   }=\frac{ \gamma^{\prime \prime}(s) \cdot (\gamma(s) \times \gamma^{\prime}(s) )  }{r}=\frac{\mathcal{H}(s)}{r}. 
\end{align}
Since  $\mathscr{C}_{\gamma}$ is a Willmore cone, 
\begin{align*}
0&=\Delta_{\mathscr{C}_{\gamma} } H_{\mathscr{C}_{\gamma}}+\frac{1}{2} H_{\mathscr{C}_{\gamma}}^{3}-2H_{\mathscr{C}_{\gamma}} K_{\mathscr{C}_{\gamma}}\\
&=\frac{1}{r} \{   \frac{\partial}{\partial r}(  r\frac{\partial}{\partial r} ( \frac{\mathcal{H}(s)}{r}  )     )  +\frac{\partial}{\partial s}( \frac{r}{r^{2}} \frac{\partial}{\partial s} ( \frac{\mathcal{H}(s)}{r}  )      )               \}  +\frac{1}{2}  \frac{\mathcal{H}^{3}(s)}{r^{3}}  \\
&=\frac{\mathcal{H}(s)}{r^{3}}+\frac{\mathcal{H}^{\prime \prime}(s)}{r^{3} }+\frac{1}{2}  \frac{\mathcal{H}^{3}(s)}{r^{3}}  ,
\end{align*}
which implies
\begin{align}
\label{Willmore cone equation}
\mathcal{H}^{\prime \prime}(s)=-\mathcal{H}(s)( 1+\frac{1}{2} \mathcal{H}^{2}   ). 
\end{align}
\end{proof}

	Indeed, by Gauss equation, 
	$\displaystyle 
	\nabla^{\mathbb{S}^{2}}_{\gamma^{\prime}} \gamma^{\prime}=\gamma^{\prime \prime}-(\gamma^{\prime \prime} \cdot \gamma ) \gamma. 
	$
	Note that 
	\begin{align*}
	&\nabla^{\mathbb{S}^{2}}_{\gamma^{\prime}} \gamma^{\prime} \cdot \gamma=\gamma^{\prime \prime} \cdot \gamma-\gamma^{\prime \prime} \cdot \gamma =0,\\
	& 	\nabla^{\mathbb{S}^{2}}_{\gamma^{\prime}} \gamma^{\prime} \cdot \gamma^{\prime}=(\gamma^{\prime \prime}-(\gamma^{\prime \prime} \cdot \gamma ) \gamma) \cdot \gamma^{\prime}=0, \\&
	\nabla^{\mathbb{S}^{2}}_{\gamma^{\prime}} \gamma^{\prime} \cdot (\gamma \times \gamma^{\prime})=\gamma^{\prime \prime} \cdot  (\gamma \times \gamma^{\prime}), 
	\end{align*}
	which implies an intrinsic ODE equation $\displaystyle \nabla^{\mathbb{S}^{2}}_{\gamma^{\prime}} \gamma^{\prime}=\mathcal{H} \mathcal{J}(\gamma^{\prime})$, where $\mathcal{J}$ is a rotation operator. More precisely, assume $\{E_{1},E_{2}\}$ is a orthonormal frame on $\mathbb{S}^{2}$, then $\mathcal{J}(E_{1}(p))=E_{2}(p), \mathcal{J}(E_{2}(p))=-E_{1}(p)$. By the standard ODE theorey, given a function $\mathcal{H}$, there exists a unique $\gamma \subset \mathbb{S}^{2}$ solving the equation $\displaystyle \nabla^{\mathbb{S}^{2}}_{\gamma^{\prime}} \gamma^{\prime}=\mathcal{H} \mathcal{J}(\gamma^{\prime})$ with initial data: $\gamma(0)=p, \gamma^{\prime}(0)=v$. 

Now we come to analyse the equation (\ref{Willmore cone equation}) with initial conditions as follows. 
\begin{prop}
	\label{prop Willmore cone ode comparison}
	Assume $\mathcal{H}_{a}$ satisfies
	\begin{align*}
	\mathcal{H}_{a}^{\prime \prime}&=-\mathcal{H}_{a}( 1+\frac{1}{2} \mathcal{H}_{a}^{2}  ), \\
	\mathcal{H}_{a}(0)&=a>0, \quad \mathcal{H}_{a}^{\prime}(0)=0.
	\end{align*}
	Set
	$
	c_{a}=\inf \{ s>0: \mathcal{H}_{a}(s)=0   \}. 
	$
	Then \\
	(1). $0<c_{a} \leq \frac{\pi}{2}. $ \\
	(2). For any $\epsilon \in (0,0.1)$, $\displaystyle c_{a} \geq \frac{\pi}{2(1+\epsilon)}$ if $a \in ( 0, \sqrt{2 \epsilon^{2}+4\epsilon}  )$. \\
	(3). $c_{a} \rightarrow 0$ as $a \rightarrow +\infty$. 
	
\end{prop}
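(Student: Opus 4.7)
The plan is to exploit a conserved energy for the ODE and then apply Sturm-type comparison arguments. First, multiplying the equation $\mathcal{H}_a''+\mathcal{H}_a+\tfrac12\mathcal{H}_a^3=0$ by $\mathcal{H}_a'$ and integrating produces the conservation law
\begin{align*}
(\mathcal{H}_a'(s))^2+\mathcal{H}_a^2(s)+\tfrac{1}{4}\mathcal{H}_a^4(s)\equiv a^2+\tfrac{1}{4}a^4.
\end{align*}
Because the map $x\mapsto x+x^2/4$ is strictly increasing on $[0,\infty)$, this identity forces $|\mathcal{H}_a(s)|\le a$ throughout the maximal interval of existence; in particular the solution is global in time.

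For part (1), the plan is to rewrite the ODE in Sturm--Liouville form $\mathcal{H}_a''+P(s)\mathcal{H}_a=0$ with $P(s):=1+\tfrac12\mathcal{H}_a^2(s)\ge 1$, and to compare with the reference equation $u''+u=0$, $u(0)=a$, $u'(0)=0$, whose solution $u(s)=a\cos s$ first vanishes at $\pi/2$. Since $P\ge 1$, the Sturm comparison theorem forces $\mathcal{H}_a$ to vanish no later than $u$, giving $c_a\le\pi/2$; positivity $c_a>0$ is immediate from $\mathcal{H}_a(0)=a>0$ and continuity.

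For part (2), the hypothesis $a<\sqrt{2\epsilon^2+4\epsilon}$ combined with the uniform bound $|\mathcal{H}_a|\le a$ yields $P(s)\le 1+\tfrac{1}{2}a^2<(1+\epsilon)^2$ on $[0,c_a]$. Comparing now with $v(s)=a\cos((1+\epsilon)s)$, which solves $v''+(1+\epsilon)^2 v=0$ and first vanishes at $\pi/(2(1+\epsilon))$, the opposite direction of Sturm comparison gives $c_a\ge\pi/(2(1+\epsilon))$.

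For part (3), since $\mathcal{H}_a''(0)<0$ and the energy identity shows $\mathcal{H}_a'$ vanishes only at $|\mathcal{H}_a|=a$, the solution $\mathcal{H}_a$ is strictly decreasing on $[0,c_a]$, and the energy factors as $(\mathcal{H}_a')^2=(a^2-\mathcal{H}_a^2)\bigl(1+\tfrac{1}{4}(a^2+\mathcal{H}_a^2)\bigr)$. Separation of variables followed by the substitution $\mathcal{H}=a\sin\theta$ then produces the explicit formula
\begin{align*}
c_a=\int_0^{\pi/2}\frac{d\theta}{\sqrt{1+\tfrac{a^2}{4}(1+\sin^2\theta)}}\le\frac{\pi}{a},
\end{align*}
which tends to $0$ as $a\to+\infty$. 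The only nontrivial input is Sturm comparison applied in both directions, sandwiching the potential $P(s)$ via the uniform bound $|\mathcal{H}_a|\le a$; the remaining steps are routine manipulations of the conserved energy.
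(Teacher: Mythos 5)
Your argument is correct. For parts (1) and (2) it is essentially the paper's own proof in different clothing: the auxiliary functions $\phi(s)=-\sin s\,\mathcal{H}_{a}(s)-\cos s\,\mathcal{H}_{a}^{\prime}(s)$ and $\psi$ used in the paper are exactly the Wronskians of $\mathcal{H}_{a}$ with the comparison solutions $\cos s$ and $\cos((1+\epsilon)s)$, so the paper's monotonicity computation is a hands-on implementation of the two-sided Sturm comparison you invoke; the only input you need to make (2) rigorous, namely $|\mathcal{H}_{a}|\le a$ on $[0,c_{a}]$, you correctly extract from the conserved energy (the paper gets the same bound from $\mathcal{H}_{a}^{\prime\prime}<0$, $\mathcal{H}_{a}^{\prime}(0)=0$ on $[0,c_{a})$, and records the energy identity only later, in the proof of the periodicity proposition). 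For part (3) your route is genuinely different and, I would say, sharper: the paper rescales, $\hat{\mathcal{H}}_{a}(s)=\frac{1}{a}\mathcal{H}_{a}(\frac{s}{a})$, passes to the limit equation $\Psi^{\prime\prime}=-\frac{1}{2}\Psi^{3}$, and locates a zero of $\mathcal{H}_{a}$ near a zero of $\Psi$ by locally uniform convergence, which gives $c_{a}\to 0$ but no rate; your quadrature formula $c_{a}=\int_{0}^{\pi/2}\bigl(1+\tfrac{a^{2}}{4}(1+\sin^{2}\theta)\bigr)^{-1/2}\,d\theta$ gives the explicit decay $c_{a}\le \pi/a$, and as a bonus it re-proves (1) (bound the integrand by $1$) and (2) (bound it below by $(1+\tfrac{a^{2}}{2})^{-1/2}>(1+\epsilon)^{-1}$), so the whole proposition could be read off from that single identity. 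The rescaling argument, on the other hand, generalizes more readily to situations where the ODE is not integrable by quadrature.
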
 
\begin{proof}
	(1) and (2) follow from an ODE comparison argument. 
	
	Set 
	$\displaystyle 
	\phi(s)=- \sin s \mathcal{H}_{a}(s)- \cos s \mathcal{H}_{a}^{\prime}(s).
	$
	Then for any $0<s<\min\{ c_{a}, \frac{\pi}{2} \}$, 
	\begin{align*}
	\phi^{\prime}(s)&=- \sin s \mathcal{H}_{a}^{\prime}(s)- \cos s \mathcal{H}_{a}(s)+ \sin s \mathcal{H}_{a}^{\prime}(s)- \cos s \mathcal{H}_{a}^{\prime \prime}(s) \\
	&=- \cos s (\mathcal{H}_{a}(s)+\mathcal{H}_{a}^{\prime \prime}(s)  )=\frac{1}{2} \cos s \mathcal{H}_{a}^{3}(s)>0,
	\end{align*}
	which implies
$\displaystyle 
	\phi(s) > \phi(0)=0, $
	then
	$\displaystyle 
	\frac{-\sin s}{\cos s} > \frac{ \mathcal{H}_{a}^{\prime}(s)}{\mathcal{H}_{a}(s)}.$
	By integration,
$\displaystyle 
	\ln \frac{ \cos s}{ \cos 0}> \ln \frac{ \mathcal{H}_{a}(s)}{\mathcal{H}_{a}(0)},$
	then, 
$\displaystyle 
	\frac{ \cos s}{\mathcal{H}_{a}(s) } > \frac{ \cos 0}{\mathcal{H}_{a}(0)} = \frac{1}{a}, 
$
	which implies $c_{a} \leq \frac{\pi}{2}$. 
	
	Similarly, set 
	$$
	\psi(s)=-(1+\epsilon) \sin((1+\epsilon)s) \mathcal{H}_{a}(s)- \cos ((1+\epsilon)s) \mathcal{H}_{a}^{\prime}(s).
	$$
	Then 
	\begin{align*}
	\psi^{\prime}(s)=-(1+\epsilon)^{2}  \cos((1+\epsilon)s) \mathcal{H}_{a}(s)- \cos((1+\epsilon)s) \mathcal{H}_{a}^{\prime \prime}(s).
	\end{align*}
	If $a \in (0,\sqrt{2 \epsilon^{2}+4\epsilon} )$, 
	\begin{align*}
	0=\mathcal{H}_{a}^{\prime \prime}(s)+\mathcal{H}_{a}(s)(1+\frac{1}{2} \mathcal{H}_{a}^{2}(s)) < \mathcal{H}_{a}^{\prime \prime}(s)+(1+\epsilon)^{2}\mathcal{H}_{a}(s), \forall s< c_{a}, 
	\end{align*}
	which implies $\psi^{\prime}(s) \leq 0$ for $\displaystyle s < \min\{ c_{a}, \frac{\pi}{2(1+\epsilon)}   \}$. 
	Since $\psi(0)=0$, 
	\begin{align*}
	-(1+\epsilon)  \sin((1+\epsilon)s) \mathcal{H}_{a}(s)- \cos ((1+\epsilon)s) \mathcal{H}_{a}^{\prime}(s)<0.
	\end{align*}
	Then
	$\displaystyle
	\frac{ \mathcal{H}_{a}^{\prime}(s)}{\mathcal{H}_{a}(s) } >-\frac{(1+\epsilon) \sin((1+\epsilon)s)}{ \cos((1+\epsilon)s) }. 
	$
	By integration, we obtain $\displaystyle \frac{\mathcal{H}_{a}(s)}{ \cos((1+\epsilon)s) } > \frac{ \mathcal{H}_{a}(0)}{ \cos 0 } = \frac{1}{a}$, which implies $\displaystyle c_{a} \geq \frac{\pi}{2(1+\epsilon)}$. 
	
	For (3), we consider $\displaystyle \hat{\mathcal{H}}_{a}(s)=\frac{1}{a} \mathcal{H}_{a}(\frac{s}{a})$. By direct calculations, 
	\begin{align*}
	\hat{\mathcal{H}}_{a}^{\prime \prime}(s)=-\hat{\mathcal{H}}_{a}(s)( \frac{1}{a^{2}}+\frac{1}{2} \hat{\mathcal{H}}_{a}^{2}(s)  ), \quad \hat{\mathcal{H}}_{a}(0)=1,\quad  \hat{\mathcal{H}}_{a}^{\prime}(0)=0. 
	\end{align*}
	Assume $\Psi$ satisfies 
	\begin{align*}
	\Psi^{\prime \prime}(s)=-\frac{\Psi^{3}(s)}{2}, \quad \Psi(0)=1, \quad \Psi^{\prime}(0)=0,  
	\end{align*}
	then $\hat{\mathcal{H}}_{a}$ converges locally uniformly to $\Psi$ as $a \rightarrow +\infty$. Set $\displaystyle \alpha=\min\{s>0| \Psi(s)=0\}$. By the symmetry and monotonicity of $\Psi$, we know that $\displaystyle \Psi(\frac{\alpha}{2})=-\Psi(\frac{3\alpha}{2})>0$. Then for $a$ sufficiently large, $\displaystyle \hat{\mathcal{H}}_{a}(\frac{\alpha}{2}  )>0, \hat{\mathcal{H}}_{a}(\frac{3\alpha}{2}  )<0$, which implies that there exists $\displaystyle\alpha_{a} \in (\frac{\alpha}{2a},\frac{3\alpha}{2a} )$ such that $\mathcal{H}_{a}(\alpha_{a})=0$. Then $c_{a} \leq \alpha_{a} \rightarrow 0$ as $a \rightarrow +\infty$. 
\end{proof}

\begin{prop}
	\label{prop  initial data property of H}
	Assuming  $\mathcal{H}:\mathbb{R} \rightarrow \mathbb{R}$ is a nonconstant function solving the equation:
	$\displaystyle
	\mathcal{H}^{\prime \prime}=-\mathcal{H}(1+\frac{1}{2} \mathcal{H}^{2}), 
	$
	$\mathcal{H}$ is periodic. 
	\end{prop}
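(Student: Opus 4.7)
\medskip

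\textbf{Proof plan.} The plan is to exploit the fact that the ODE $\mathcal{H}'' = -\mathcal{H}(1+\tfrac{1}{2}\mathcal{H}^{2})$ is a Newton equation with potential $V(h) = \tfrac{1}{2}h^{2}+\tfrac{1}{8}h^{4}$, and to read off periodicity from the conservation of energy.

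First I would multiply the equation by $\mathcal{H}'$ and integrate to produce the conserved quantity
\begin{equation*}
E \;=\; \frac{1}{2}(\mathcal{H}'(s))^{2} + \frac{1}{2}\mathcal{H}(s)^{2} + \frac{1}{8}\mathcal{H}(s)^{4},
\end{equation*}
which is independent of $s$ on any interval where $\mathcal{H}$ is defined. Since $V$ is nonnegative, even, strictly increasing on $[0,\infty)$, and coercive, the identity $V(\mathcal{H}(s)) \le E$ immediately forces $|\mathcal{H}(s)| \le h_{\max}$, where $h_{\max} > 0$ is the unique positive root of $V(h) = E$. This a priori bound extends the solution to all of $\mathbb{R}$ (which we are already granting in the statement) and rules out the degenerate case $E = 0$, since $\mathcal{H}$ is nonconstant. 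Hence $E > 0$ and $h_{\max} > 0$.

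Next I would run the standard phase-plane / quadrature argument. On any maximal interval where $\mathcal{H}'>0$, separation of variables gives
\begin{equation*}
s - s_{0} \;=\; \int_{\mathcal{H}(s_{0})}^{\mathcal{H}(s)} \frac{dh}{\sqrt{2\bigl(E - V(h)\bigr)}}.
\end{equation*}
Because $V'(h) = h(1+\tfrac{1}{2}h^{2})$ is nonzero at $h = \pm h_{\max}$, the integrand has only an integrable square-root singularity at the endpoints, so the time needed to reach $\pm h_{\max}$ starting from any interior point is finite. Thus $\mathcal{H}$ actually attains the values $\pm h_{\max}$, at which points $\mathcal{H}' = 0$. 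At such a turning point, the uniqueness theorem for the second-order ODE shows that $\mathcal{H}$ is symmetric about that point (the reflected function solves the same ODE with the same data). Concatenating the reflection at the maximum $h_{\max}$ with the reflection at the subsequent minimum $-h_{\max}$ produces a translation of $\mathcal{H}$ by
\begin{equation*}
T \;=\; 2\int_{-h_{\max}}^{h_{\max}} \frac{dh}{\sqrt{2\bigl(E - V(h)\bigr)}} \;<\; \infty,
\end{equation*}
and the uniqueness theorem applied at a turning point then yields $\mathcal{H}(s+T) = \mathcal{H}(s)$ for all $s$.

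The main (minor) obstacle is verifying that the turning points are reached in finite $s$ and that the solution does not stall at a point where $\mathcal{H}' = 0$ without $\mathcal{H} = \pm h_{\max}$; both issues are handled once one observes that $E > 0$ forces $(\mathcal{H}')^{2} > 0$ wherever $|\mathcal{H}| < h_{\max}$, and that $V'(\pm h_{\max}) \neq 0$ ensures integrability of $(E-V(h))^{-1/2}$ at the endpoints. Everything else is a textbook energy/quadrature argument, and no additional geometry is needed.
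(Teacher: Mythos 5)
Your argument is correct, and its skeleton --- a first integral for the Newton equation plus reflection symmetry obtained from ODE uniqueness at points where $\mathcal{H}'=0$ --- is the same one the paper uses; the differences are in how the two key facts are secured. The paper anchors at a \emph{zero} of $\mathcal{H}$ rather than at a turning point: it first shows that $\mathcal{H}$ must vanish somewhere (a Liouville/concavity step), normalizes $\mathcal{H}(0)=0$, uses the energy identity to write $\mathcal{H}'(0)=-\sqrt{a^{2}+a^{4}/4}$ for a unique $a>0$, and then combines oddness about the zero with evenness about the critical point $c_{a}$ of the normalized solution $\mathcal{H}_{a}$ to get period $4c_{a}$. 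More substantively, the finiteness of the half-period is obtained differently: the paper imports the Sturm-type comparison bound $0<c_{a}\le \pi/2$ from Proposition \ref{prop Willmore cone ode comparison}, whereas you get it from the quadrature formula together with integrability of $(E-V(h))^{-1/2}$ at the simple zeros $\pm h_{\max}$ of $E-V$. Your route is self-contained and works for any coercive potential with a single nondegenerate minimum; the paper's route costs nothing extra because the comparison proposition is needed anyway for the quantitative control of $c_{a}$, and pinning the solution to the family $\mathcal{H}_{a}$ sets up the explicit parametrization of Willmore cones used in the rest of Section 3. If you flesh out your sketch, the only points to spell out are that a maximal interval of monotonicity has finite endpoints at which $\mathcal{H}'=0$ and hence $V(\mathcal{H})=E$ (so $\mathcal{H}=\pm h_{\max}$ there), and the degenerate possibility that $\mathcal{H}'$ never changes sign, which is excluded by the same reasoning applied to intervals of decrease.
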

\begin{proof}
	\textbf{Step 1:} Let $\mathcal{H}_{a}$ and $c_{a}$ be defined in Proposition \ref{prop Willmore cone ode comparison}. For any $x \leq c_{a}$, 
	\begin{align*}
	((\mathcal{H}_{a}^{\prime})^{2})^{\prime}=2\mathcal{H}_{a}^{\prime}\mathcal{H}_{a}^{\prime \prime}=- (\mathcal{H}_{a}^{2})^{\prime} (1+\frac{1}{2} \mathcal{H}_{a}^{2}).
	\end{align*}
	By integration, we obtain
	\begin{align*}
	(\mathcal{H}_{a}^{\prime})^{2}(x)=-(\mathcal{H}_{a}^{2}(x)+\frac{1}{4}\mathcal{H}_{a}^{4}(x))+a^{2}+\frac{a^{4}}{4},
	\end{align*}
	which implies $\displaystyle \mathcal{H}_{a}^{\prime}(c_{a})=-\sqrt{a^{2}+\frac{a^{4}}{4}}. $ We should note that $\displaystyle a \rightarrow a^{2}+\frac{a^{4}}{2}$ is a bijection between $\mathbb{R}^{+}$ and $\mathbb{R}^{+}$.
	 
	\textbf{Step 2:}
	By Liouville theorem, we may assume $\mathcal{H}(0)=0$. By the uniqueness of ODE, we also know that $\mathcal{H}$ is odd. We may further assume $\displaystyle \mathcal{H}^{\prime}(0)<0$ since $\mathcal{H} \equiv 0$ if $\mathcal{H}^{\prime}(0)=0$. Assume $\displaystyle \mathcal{H}^{\prime}(0)=-\sqrt{a^{2}+\frac{a^{4}}{4}}$ for some $a>0$ (unique). By 	\textbf{Step 1} and the uniqueness of ODE, there exists $c_{a}$ such that
	\begin{align*}
	\mathcal{H}^{\prime}(c_{a})=0, \quad \mathcal{H}(x)=\mathcal{H}( 2c_{a}-x ). 
	\end{align*}
	Therefore, $\mathcal{H}(x)=\mathcal{H}(x+4c_{a}), \forall x \in \mathbb{R}$. 
\end{proof}

	Now we assume $\gamma_{a}$ is the solution of $\displaystyle \nabla^{\mathbb{S}^{2}}_{\gamma_{a}^{\prime}} \gamma_{a}^{\prime}=\mathcal{H}_{a} \mathcal{J}(\gamma_{a}^{\prime})$ where $\mathcal{H}_{a}$ satisfies
	\begin{align*}
	\mathcal{H}_{a}^{\prime \prime}=-\mathcal{H}_{a}(1+\frac{1}{2} \mathcal{H}_{a}^{2}), \quad \mathcal{H}_{a}(0)=a>0, \quad \mathcal{H}_{a}^{\prime}(0)=0.
	\end{align*}
	By Proposition \ref{prop Willmore cone ode comparison} and Proposition \ref{prop initial data property of H}, there exists $c_{a} \leq \frac{\pi}{2}$ such that $\displaystyle \mathcal{H}_{a}(c_{a})=0, \mathcal{H}_{a}^{\prime}(c_{a})=-\sqrt{a^{2}+\frac{a^{4}}{4}}$. 
	
	Now we choose a coordinate system of $\mathbb{R}^{3}$ such that\\
	(1). $\gamma_{a}^{\prime}(0) $ is normal to $xz-plane$. \\
	(2). $y$-component of $\gamma_{a}(0)$ is 0, $z$-component of $\gamma_{a}(c_{a})$ is 0. 
	
	In this coordinate system, we write $\gamma_{a}(s)$ as $\{ ( \mathcal{X}(s), \mathcal{Y}(s), \mathcal{Z}(s) )   \}$. We set 
	\begin{align*}
	\tilde{\gamma}_{a}(s)=( \mathcal{X}(-s), -\mathcal{Y}(-s), \mathcal{Z}(-s)   ), 
	\end{align*}
	then by (1) and direct calculations, 
	\begin{align*}
	&\tilde{\gamma}_{a}(0)=\gamma_{a}(0), \quad \tilde{\gamma}_{a}^{\prime}(0)=\gamma_{a}^{\prime}(0), \\
	&	\tilde{\gamma}_{a}^{\prime \prime}(s) \cdot (	\tilde{\gamma}_{a}(s) \times 	\tilde{\gamma}_{a}^{\prime}(s))=\mathcal{H}_{a}(-s)=\mathcal{H}_{a}(s),
	\end{align*}
	 which implies $\gamma_{a}=\tilde{\gamma}_{a}$ by the uniqueness. 
	 
	 Now we select a new coordinate system $(\tilde{x},\tilde{y},\tilde{z})$ such that $\tilde{z}=z$ and $\tilde{x}$-component of $\gamma_{a}(c_{a})$ is 1. 
	 In this new coordinate system, we rewrite $\gamma_{a}(s)$ as $\{(X(s),Y(s),Z(s))\}$. Set
	$
	 \hat{\gamma}_{a}(s)=(X(2c_{a}-s), -Y(2c_{a}-s),-Z(2c_{a}-s)). 
	 $
	 We know that $\hat{\gamma}_{a}^{\prime}(0)$ is normal to $\tilde{x}$-axis, then
	 \begin{align*}
	 	& \hat{\gamma}_{a}(c_{a})=\gamma_{a}(c_{a}), \quad \hat{\gamma}_{a}^{\prime}(c_{a})=\gamma_{a}^{\prime}(c_{a}), \\
	 	& \hat{\gamma}_{a}^{\prime \prime}(s) \cdot (	\hat{\gamma}_{a}(s) \times 	\hat{\gamma}_{a}^{\prime}(s))=-\mathcal{H}_{a}(2c_{a}-s)=\mathcal{H}_{a}(s), 
	 \end{align*}
	 which implies $\gamma_{a}=\hat{\gamma}_{a}$ by the uniqueness again. 
	 
	Two symmetry properties obtained above are shown in Figure \ref{Figure 2}. 
	\begin{figure}[H]
		\centering
		\includegraphics[scale=0.5]{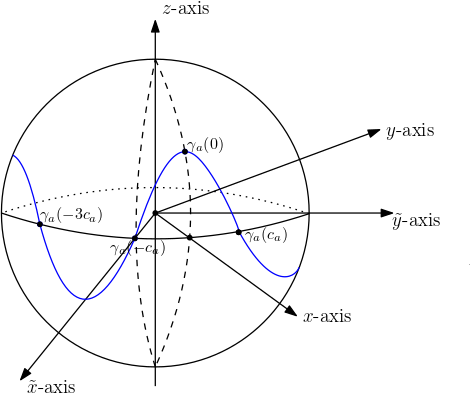}
		\caption{}
		\label{Figure 2}
	\end{figure}
	
We 	set $\displaystyle T_{a}= d_{\mathbb{S}^{2}}( \gamma_{a}(-c_{a}), \gamma_{a}(c_{a}) )$, $T_{a}$ is continuous with respect to $a$. By Proposition \ref{prop Willmore cone ode comparison} (3) and triangle inequality, 
	$\displaystyle
	\lim_{a \rightarrow +\infty} T_{a}=0$. By  Proposition \ref{prop Willmore cone ode comparison} (2), $\displaystyle c_{a} \rightarrow \frac{\pi}{2}$ and $\gamma_{a}$ tends to part of a great circle as $a \rightarrow 0$, which implies $\displaystyle
	\lim_{a \rightarrow 0+} T_{a}=\pi$. It is possible that for some $a$, $T_{a}=0$. However, if the image of $\gamma_{a}$ is a closed, embedded curve in $\mathbb{S}^{2}$, then $2m T_{a}=2 \pi $ for some integer $m \geq 1$. 
	The following theorem follows from the above argument immediately. 
	
	\begin{theorem}
		\label{theorem countable embedded Willmore cones}
		Assume $\mathscr{C}_{\gamma}$ is a Willmore cone. If the image of $\gamma$ is a closed, embedded curve in $\mathbb{S}^{2}$ and not a great circle, then the length of the image is strictly larger than $2 \pi$. 
	\end{theorem}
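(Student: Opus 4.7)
The plan is to combine the symmetries of $\gamma$ already established with a strict comparison between arc length and spherical distance.

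Since the image of $\gamma$ is not a great circle, the geodesic curvature $\mathcal{H}$ of $\gamma$ is not identically zero, so by Proposition \ref{prop  initial data property of H} the function $\mathcal{H}$ is periodic. After translating the arclength parameter and, if necessary, reversing the orientation of $\gamma$, I may assume $\mathcal{H}=\mathcal{H}_a$ for some $a>0$ and correspondingly $\gamma=\gamma_a$. By hypothesis the image is closed and embedded, so the discussion preceding the theorem forces $2m T_a = 2\pi$ for some integer $m\geq 1$. The two reflection symmetries displayed in Figure \ref{Figure 2} then organize the image into $2m$ isometric copies of the arc $\gamma_a|_{[-c_a,c_a]}$, each of length $2c_a$, so the total length of the image equals $4mc_a$.

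The heart of the proof is therefore the strict inequality $2c_a>T_a$. By definition $T_a=d_{\mathbb{S}^{2}}(\gamma_a(-c_a),\gamma_a(c_a))$, while $\gamma_a|_{[-c_a,c_a]}$ is a smooth curve on $\mathbb{S}^{2}$ of length $2c_a$ joining these two points, so $2c_a\geq T_a$. If equality held, this arc would realize the infimum of lengths of curves between its endpoints and would therefore, up to reparametrization, be a minimizing geodesic of $\mathbb{S}^{2}$, forcing its geodesic curvature to vanish identically. But the geodesic curvature of $\gamma_a$ at $s=0$ equals $\mathcal{H}_a(0)=a>0$, a contradiction. Hence $2c_a>T_a$, and combining the two pieces gives
\[
\text{length}=4mc_a>2mT_a=2\pi,
\]
which is the desired conclusion.

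The main point requiring care is the strict inequality $2c_a>T_a$: it relies on the Riemannian-geometric fact that a smooth curve whose arclength equals the distance between its endpoints must, up to reparametrization, be a minimizing geodesic, and therefore have zero geodesic curvature. Once this fact is in hand, the rest of the proof amounts to assembling the symmetries and ODE identities already established in Section 3.
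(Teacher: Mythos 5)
Your proof is correct and follows the same route the paper intends: the discussion preceding the theorem reduces everything to the decomposition of the image into $2m$ congruent arcs with $2mT_{a}=2\pi$, and the only missing step (which the paper leaves as ``follows immediately'') is exactly the strict inequality $2c_{a}>T_{a}$, which you justify correctly via the rigidity of length-minimizing curves and $\mathcal{H}_{a}(0)=a>0$.
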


\section{A Bernstein-type theorem}

Throughout this section, we denote
\begin{align*}
D_{r}(x)=\{ y \in \mathbb{R}^{2}: |y-x| \leq r  \}, \quad D_{r}=D_{r}(0). 
\end{align*}

We firstly recall two technical lemmas. 
\begin{lemma}
	\label{lemma Helein intrisic theorem}
	Let $g=e^{2u} g_{\mathbb{R}^{3}}$ be a metric defined on $D_{1}$, and $K_{g}$ be the Gaussian curvature. 
	We  assume for some constant $\Lambda>0$ and any geodesic ball $B_{r}^{g}(x) $, 
	\begin{align}
	\label{BTT1}
	\mu_{g}(B_{r}^{g}(x))   \leq \Lambda r^{2}.
	\end{align}
	Then, there exists $\epsilon_{0}(\Lambda) >0$ such that if 
	\begin{align}
	&\int_{D_{1}} |K_{g}|d \mu_{g}<\epsilon_{0},  \label{BTT2} 
	\end{align}
	then for any $q \in [1,2)$, 
	there exists $C(\epsilon_{0},q)$ such that 
	$$
	\int_{D_{r}(x)} |\nabla u|^{q} d \mu_{\mathbb{R}^{2}} \leq C r^{2-q}, \quad \forall D_{r}(x) \subset D_{\frac{1}{2}}.
	$$
\end{lemma}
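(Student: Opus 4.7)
We reformulate the hypothesis as a PDE for $u$. Since $g = e^{2u} g_{\mathbb{R}^{2}}$ is conformal, the Gauss equation reads $-\Delta u = K_{g} e^{2u}$ on $D_{1}$, so the smallness hypothesis becomes $\|K_{g} e^{2u}\|_{L^{1}(D_{1})} = \int_{D_{1}} |K_{g}|\, d\mu_{g} < \epsilon_{0}$. The plan is to split $u$ into a ``Poisson part'' carrying the (small) $L^{1}$ mass of the right hand side, and a ``harmonic part'' whose gradient is smooth on the interior, and control each summand using a different hypothesis.

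Concretely, on a slightly shrunken disk, say $D_{3/4}$, let $u_{1}$ solve the Dirichlet problem $-\Delta u_{1} = K_{g} e^{2u}$ in $D_{3/4}$ with $u_{1}|_{\partial D_{3/4}} = 0$, and set $u_{2} = u - u_{1}$, which is harmonic on $D_{3/4}$. For $u_{1}$, representing $u_{1}$ via the Green kernel and applying the weak $(1,1)$ estimate for the Riesz potential $|\nabla G| \ast |\,\cdot\,|$ yields $\|\nabla u_{1}\|_{L^{2,\infty}(D_{3/4})} \leq C\, \|K_{g} e^{2u}\|_{L^{1}} < C \epsilon_{0}$. Converting this weak-$L^{2}$ bound to an $L^{q}$ estimate with the correct scaling (distribution function computation, using $|\{|\nabla u_{1}| > t\} \cap D_{r}(x)| \leq \min(\pi r^{2}, C\epsilon_{0}^{2}/t^{2})$) gives, for every $q \in [1,2)$ and every $D_{r}(x) \subset D_{1/2}$,
$$ \int_{D_{r}(x)} |\nabla u_{1}|^{q}\, dy \;\leq\; C(q)\, \epsilon_{0}^{q}\, r^{2-q}. $$

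For the harmonic remainder $u_{2}$, the interior gradient estimate for harmonic functions gives $\sup_{D_{1/2}} |\nabla u_{2}| \leq C \|u_{2}\|_{L^{1}(D_{3/4})}$. To bound $\|u_{2}\|_{L^{1}}$, combine the Brezis--Merle estimate (so that $u_{1}$ is exponentially integrable once $\epsilon_{0} < 4\pi$) with the volume hypothesis $\mu_{g}(B_{r}^{g}(x)) \leq \Lambda r^{2}$: comparing Euclidean to geodesic balls yields $\int_{D_{3/4}} e^{2u}\, dy \leq C(\Lambda)$, hence, dividing by the exponentially integrable factor $e^{2u_{1}}$, a bound on $\int_{D_{3/4}} e^{2u_{2}}\, dy$, and therefore on $\|u_{2}\|_{L^{1}(D_{3/4})}$. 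Once $|\nabla u_{2}|$ is uniformly bounded on $D_{1/2}$, one gets the trivial estimate $\int_{D_{r}(x)} |\nabla u_{2}|^{q} \leq C r^{2} \leq C r^{2-q}$ for $r \leq 1$. Adding the contributions of $u_{1}$ and $u_{2}$ concludes the proof.

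The main technical obstacle is the estimate on the harmonic part $u_{2}$: although the smallness of the curvature transmits naturally to $u_{1}$ via the PDE, harnessing the volume hypothesis, which is stated for \emph{geodesic} balls, to bound the Euclidean $L^{1}$ norm of $u_{2}$ requires comparing the two families of balls. Since this comparison is itself governed by the oscillation of $u$, a short bootstrap argument is needed: an initial crude oscillation estimate, coming from the Brezis--Merle bound on $u_{1}$ and the harmonicity of $u_{2}$, is used to control geodesic distances by Euclidean ones, after which the volume hypothesis can be activated to close the loop. This coupling between the volume bound and the oscillation bound on the conformal factor is the heart of H\'elein's argument and has to be carried out with some care.
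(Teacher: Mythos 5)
The paper does not actually prove this lemma; it simply cites \cite[Lemma 2.5, Lemma 3.1]{Li2019MetricsOA} and \cite[Proposition 2.2]{chenli2022}, so your attempt should be judged as a reconstruction of those arguments. Your treatment of the Poisson part $u_{1}$ is correct and standard: the Green representation gives $\|\nabla u_{1}\|_{L^{2,\infty}}\leq C\|K_{g}e^{2u}\|_{L^{1}}<C\epsilon_{0}$, and the distribution-function computation (optimizing the cutoff at $t\sim\epsilon_{0}/r$) does yield $\int_{D_{r}(x)}|\nabla u_{1}|^{q}\leq C(q)\epsilon_{0}^{q}r^{2-q}$.

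The gap is in the harmonic part. Your chain is: area hypothesis $\Rightarrow\int_{D_{3/4}}e^{2u}\leq C(\Lambda)\Rightarrow\int_{D_{3/4}}e^{2u_{2}}\leq C\Rightarrow\|u_{2}\|_{L^{1}(D_{3/4})}\leq C\Rightarrow\|\nabla u_{2}\|_{L^{\infty}(D_{1/2})}\leq C$. The last two implications fail: an upper bound on $\int e^{2u_{2}}$ controls $u_{2}$ from above (via Jensen and the mean value property) but says nothing about how negative $u_{2}$ can be, and hence nothing about $\|u_{2}\|_{L^{1}}$ or $|\nabla u_{2}|$. Concretely, $u_{2}(x)=N(x_{1}-1)$ is harmonic on $D_{3/4}$ with $\int_{D_{3/4}}e^{2u_{2}}\to 0$ as $N\to\infty$, yet $|\nabla u_{2}|=N$ is unbounded; since the conclusion of the lemma is invariant under adding constants to $u$, no normalization rescues this. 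Moreover, the first implication (Euclidean vs.\ geodesic balls) presupposes an upper bound on the geodesic diameter of $D_{3/4}$, i.e.\ an oscillation bound on $u$, which is exactly what is being sought; the ``bootstrap'' you invoke has no valid starting point, because harmonicity alone provides no a priori oscillation estimate for $u_{2}$. The actual mechanism in the cited references is different and genuinely uses the geodesic-ball hypothesis two-sidedly: the smallness of $\int|K_{g}|d\mu_{g}$ gives, via Gauss--Bonnet applied to geodesic circles, a \emph{lower} bound $\mu_{g}(B^{g}_{r}(x))\geq(\pi-\epsilon_{0}/2)r^{2}$ for balls not meeting $\partial D_{1}$, and this non-collapsing, played against the hypothesis $\mu_{g}(B^{g}_{r}(x))\leq\Lambda r^{2}$ through a covering argument, is what forces a Harnack-type bound on $e^{u_{2}}$ and hence $\|\nabla u_{2}\|_{L^{\infty}(D_{1/2})}\leq C(\Lambda)$. (One can check on the example above that it is precisely the \emph{upper} area-ratio bound that is violated when $N$ is large.) Without this step your argument does not close.
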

 \begin{proof}
 	See \cite[Lemma $2.5$, Lemma $3.1$]{Li2019MetricsOA} and \cite[Proposition $2.2$]{chenli2022}.
 	\end{proof}
 
 \begin{lemma}
 	\label{lemma harmonic function on punctured disk}
 	If $u$ is a harmonic function on $D_{1} \setminus \{0\}$, 
 	$$
 	u(z)=Re( h(z)  )+c \log |z|, z \in D_{1} \setminus \{0\},
 	$$
 	where $h(z)$ is holomorphic on $D_{1} \setminus \{0\}$. 
 \end{lemma}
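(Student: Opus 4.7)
The plan is to handle the failure of simple connectedness of $D_{1}\setminus\{0\}$ by subtracting off a suitable multiple of $\log|z|$, which is exactly the unique (up to scaling) harmonic function on the punctured disk with a nontrivial period around the origin. First, I would form the closed $1$-form
\begin{equation*}
\omega = -u_{y}\,dx + u_{x}\,dy,
\end{equation*}
which is smooth on $D_{1}\setminus\{0\}$ and closed since $d\omega = (u_{xx}+u_{yy})\,dx\wedge dy = 0$ by harmonicity of $u$. Applying Stokes' theorem on the annulus $\{r_{1} \leq |z| \leq r_{2}\}$ shows that the period $\oint_{|z|=r}\omega$ is independent of $r\in(0,1)$, so I may define
\begin{equation*}
c := \frac{1}{2\pi}\oint_{|z|=r}\omega \in \mathbb{R}.
\end{equation*}

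Next, I would set $v(z) := u(z) - c\log|z|$ on $D_{1}\setminus\{0\}$. A direct computation gives that the associated $1$-form $-v_{y}\,dx + v_{x}\,dy$ equals $\omega - c\,d(\arg z)_{\text{local}}$ in local branches, and, more to the point, its period around the origin is $2\pi c - 2\pi c = 0$. Since every closed $1$-form on $D_{1}\setminus\{0\}$ with vanishing period on the generator of $H_{1}$ is exact, there exists a single-valued smooth function $\tilde v$ on $D_{1}\setminus\{0\}$ with $\tilde v_{x} = -v_{y}$ and $\tilde v_{y} = v_{x}$. These are the Cauchy--Riemann equations, so $h(z) := v(z) + i\tilde v(z)$ is holomorphic on $D_{1}\setminus\{0\}$, and by construction
\begin{equation*}
u(z) = v(z) + c\log|z| = \mathrm{Re}\bigl(h(z)\bigr) + c\log|z|.
\end{equation*}

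The only subtle step is the period computation used to kill the monodromy of the conjugate function; everything else is routine. Once one verifies that subtracting $c\log|z|$ removes the period (which is a one-line computation since $\log|z|$ itself has period $2\pi$ for its harmonic-conjugate $1$-form $-\partial_{y}\log|z|\,dx + \partial_{x}\log|z|\,dy$), the construction of $\tilde v$ by path integration from a basepoint in $D_{1}\setminus\{0\}$ is well-defined, and the proof is complete.
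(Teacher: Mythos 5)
Your proof is correct. The paper itself gives no argument for this lemma, only a citation to Conway (Theorem 15.1.3), and what you have written is precisely the standard period computation underlying that result: the conjugate differential $-u_y\,dx+u_x\,dy$ is closed, its period around the puncture defines $2\pi c$, subtracting $c\log|z|$ kills the period because $\log|z|$ has conjugate-differential period $2\pi$, and the resulting periodless form integrates to a global harmonic conjugate, yielding the holomorphic $h$. So your argument is a valid self-contained replacement for the citation, and it is essentially the same route as the cited reference rather than a genuinely different one.
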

 \begin{proof}
 See \cite[Theorem 15.1.3]{conway2012functions} .
 \end{proof}

 Now we come to prove Theorem \ref{theorem berstein-type main theorem}. 
 \begin{proof}
 	We may assume $f^{*} g_{\mathbb{R}^{3}}=e^{2u} g_{\mathbb{R}^{2}}.$ Let $g=e^{2u}g_{\mathbb{R}^{2}}$ and $K_{g}$ be the corresponding Gaussian curvature. 
 	
 	\textbf{Step 1:}
 	For $r_{k}$ with $\displaystyle \lim_{k \rightarrow \infty} r_{k} =+\infty$, set $u_{k}(x)=u(r_{k} x)-c_{k}$ where $c_{k}$ is to be determined later, $g_{k}=e^{2u_{k}} g_{\mathbb{R}^{2}}$ and $\mu_{k}$ is the measure induced by $g_{k}$.  Since $\displaystyle \int_{\mathbb{R}^{2}} |K_{g}| d \mu_{g} \leq C$, there exists $\Lambda>0$ such that for any geodesic ball $B_{r}^{g}(x)$, 
 	$\displaystyle
 	\mu_{g}(B_{r}^{g}(x)) \leq \Lambda r^{2}$. 
 	By direct calculations, 
 	\begin{align*}
 	-\Delta u_{k}(x)&=-r_{k}^{2} \Delta u(r_{k} x)=-r_{k}^{2 } e^{2u( r_{k}x )}K_{g}(r_{k}x)=-r_{k}^{2} e^{2u_{k}(x)+2c_{k}}K_{g}(r_{k}x) , 
 	\end{align*}
 	then for any fixed $R>100$, 
 	\begin{align*}
 	&\int_{D_{2R} \setminus D_{\frac{1}{2R}}} |K_{g_{k}}(x)| d\mu_{k}(x)=
 	\int_{D_{2R} \setminus D_{\frac{1}{2R}}} |K_{g_{k}}(x)|e^{2u_{k}(x)} d\mu_{\mathbb{R}^{2}}(x)\\=&\int_{D_{2R} \setminus D_{\frac{1}{2R}}} r_{k}^{2} e^{2u_{k}(x)+2c_{k}}|K_{g}(r_{k}x) | d\mu_{\mathbb{R}^{2}}(x) 
 	=\int_{D_{2Rr_{k}} \setminus D_{\frac{r_{k}}{2R}}} |K_{g}(x)| e^{2u(x)} d\mu_{\mathbb{R}^{2}}(x) \rightarrow 0, \text{ as } k \rightarrow \infty. 
 	\end{align*}
 	
 Therefore, for sufficiently large $k$, $\int_{D_{4} \setminus D_{\frac{1}{2}}} |K_{g_{k}}| d\mu_{k} \leq \epsilon_{0}$, where $\epsilon_{0}$ is chosen as in Lemma \ref{lemma Helein intrisic theorem}. 
 	For any $D_{r}(x) \subset \mathbb{R} \setminus \{0\}$ with $\displaystyle r<\frac{|x|}{3}$, choose $\rho$ such that $D_{r}(x) \subset D_{2\rho} \setminus D_{\rho}$. 
 	Set $v_{k}(x)=u_{k}(\rho x)$. Then $D_{ \frac{r}{\rho}  }(\frac{x}{\rho}) \subset D_{2} \setminus D_{1}$ and 
 	$\displaystyle
 	\int_{ D_{\frac{r}{\rho}}(\frac{x}{\rho})} |K_{g_{v_{k}}}| d \mu_{v_{k}}=\int_{ D_{r}(x)} |K_{g_{k}}| d \mu_{k},
 	$
 	where $\mu_{v_{k}}$ and $K_{g_{v_{k}}}$ are induced by $v_{k}$. By Lemma \ref{lemma Helein intrisic theorem}, 
 	\begin{align*}
 	&r^{q-2} \int_{ D_{r}(x)} |\nabla u_{k}(y)|^{q} d \mu_{\mathbb{R}^{2}}(y)
 	=r^{q-2} \int_{ D_{ \frac{r}{\rho}  }(\frac{x}{\rho})} |\nabla u_{k}(\rho z)|^{q} \rho^{2} d \mu_{\mathbb{R}^{2}}(z) \\
 	=& r^{q-2} \int_{ D_{ \frac{r}{\rho}  }(\frac{x}{\rho})}  |\nabla v_{k}(z)|^{q}  \rho^{2-q} d \mu_{\mathbb{R}^{2}}(z) \leq C. 
 	\end{align*}
 	
 	In conclusion, combining with a covering argument, we obtain for any $q \in [1,2)$, for any $R>0$, 
 	\begin{align}
 	&\int_{ D_{R} \setminus D_{\frac{1}{R}}} |\nabla u_{k}|^{q} d \mu_{\mathbb{R}^{2}} \leq C(\epsilon_{0},R,q), \label{BD1}\\
 	&\int_{ D_{r}(x) } |\nabla u_{k}|^{q} d \mu_{\mathbb{R}^{2}} \leq C(\epsilon_{0},q) r^{2-q}, \quad \forall D_{r}(x) \subset \mathbb{R}^{2} \setminus \{0\} \text{ with } r<\frac{|x|}{3}  . \label{BD2}
 	\end{align}

 	Now we take $\displaystyle c_{k}=\frac{1}{ |D_{2} \setminus D_{\frac{1}{2}} |   } \int_{ D_{2} \setminus D_{\frac{1}{2} }} u(r_{k}x) d \mu_{\mathbb{R}^{2}}(x)$ which is independent of $R$, then by Poinca$\acute{r}$e inequality(see  \cite[Theorem 5.4.3]{attouch2014variational}) and (\ref{BD1}), 
 	\begin{align*}
 	\int_{D_{R} \setminus D_{\frac{1}{R}}} |u_{k}(x)| d \mu_{\mathbb{R}^{2}}(x)
 	\leq  C [  \int_{D_{R} \setminus D_{\frac{1}{R}}} |\nabla u_{k}(x)|^{q}   d \mu_{\mathbb{R}^{2}}(x)      ]^{\frac{1}{q}}  \leq C(\epsilon_{0},R,q),
  	\end{align*}
  	which implies $u_{k}$ converges to some $u_{\infty}$ weakly in $W^{1,q}( D_{R} \setminus D_{\frac{1}{R}}  )$ and $-\Delta u_{\infty}=0$ weakly in $D_{R} \setminus D_{\frac{1}{R}}$ since $K_{g_{k}}e^{2u_{k}}$ converges to 0 in $L^{1}( D_{R} \setminus D_{\frac{1}{R}}  )$.
  	By the arbitrariness of $R$, we conclude that $u_{k}$ converges to $u_{\infty}$ weakly in $W_{loc}^{1,q}(\mathbb{R}^{2} \setminus \{0\})$, $u_{\infty}$ is harmonic on $\mathbb{R}^{2} \setminus \{0\}$. By the semi-continuity of weak convergence and (\ref{BD2}), 
  	$$
  	\int_{ D_{r}(x)} | \nabla u_{\infty} |^{q}  \leq \liminf_{k \rightarrow \infty}  \int_{D_{r}(x)} |\nabla u_{k}|^{q} \leq C(\epsilon_{0},q)r^{2-q}, \quad \forall D_{r}(x) \subset \mathbb{R}^{2} \setminus \{0\} \text{ with } r<\frac{|x|}{3}.
  	$$ 
  	
  	Fix $R>0$, by Lemma \ref{lemma harmonic function on punctured disk}, there exists $h$ holomorphic on $D_{R} \setminus \{0\}$ and $c$ such that
  	$$
  	u_{\infty}(x)=Re(h(x))+c \log |x|, \quad \forall x \in D_{R} \setminus \{0\}. 
  	$$
  	Since $\log z$ is not holomorphic on $D_{R} \setminus \{0\}$, $h$ and $c$ do not depend on the choice of $R$. 
  	For any $D_{r}(x) \subset D_{R} \setminus \{0\}$ with $\displaystyle r<\frac{|x|}{3}$, 
  	\begin{align*}
  	\int_{D_{r}(x)} |\nabla \log|z| |^{q} =\int_{D_{r}(x)} |\frac{z}{|z|^{2}}|^{q} \leq \frac{ \pi r^{2}}{|z|^{q}} \leq C\pi r^{2-q}.
  	  	\end{align*}
  	  	Then 
  	  	\begin{align*}
  	  	|\nabla h(x)| &\leq \frac{1}{\pi r^{2}}\int_{D_{r}(x)} |\nabla h(z)| \leq C(\int_{D_{r}(x)} |\nabla h(z)|^{q})^{\frac{1}{q}} r^{2-\frac{ 2}{q} } r^{-2} \leq \frac{C}{r}, 
  	  	\end{align*}
 which implies $|x| |h^{\prime}(x)| \leq C$. Consider the Laurent expansion of $h^{\prime}$:
 \begin{align*}
 h^{\prime}(z)=\sum_{k=-\infty}^{+\infty} a_{k} z^{k}, \quad a_{-1}=0, \quad a_{k}=\frac{1}{2\pi \textbf{i} } \int_{ |z|=\rho} \frac{ h^{\prime}(z)}{|z|^{k+1}}, \forall \rho>0. 
 \end{align*}
  	  Since $\displaystyle |a_{k}| \leq \frac{C}{2 \pi} \int_{ |z|=\rho} |z|^{-k-2} dx \leq C \rho^{-k-1}$, we have $a_{k}=0$ for $k\neq -1$. Then $h$ is constant.  
  	  
  	  In conclusion, $u_{\infty}(x)=c_{1} \log|x|+c_{2}, \forall x \in \mathbb{R}^{2} \setminus \{0\}$ for some constants $c_{1}$ and $c_{2}$.  
  	  
  	  	\textbf{Step 2:}
  	  	Assume $f(0)=0$ and set
  	  	$ \displaystyle 
f_{k}(x)=\frac{f( r_{k}x ) 	  	}{r_{k} e^{c_{k}} }.
  	  	$
  	  	Then
  	  	\begin{align*}
  	  	|\nabla f_{k}(x)|^{2} =|\nabla f(r_{k}x)|^{2} e^{-2c_{k}}=2e^{2u(r_{k}x)-2c_{k}  }=2e^{2u_{k}(x)}. 
  	  	\end{align*}
  	  
  	  	Set $\displaystyle h_{k}(x)=\inf \{ r : \int_{D_{r}(x)} |A_{k}|^{2} d \mu_{k} \geq \gamma_{0}     \}$, where $\gamma_{0}$ is not larger than $\gamma$ in \cite[Theorem 5.1.1]{helein2002harmonic} and $\epsilon$ in \cite[Theorem  \uppercase\expandafter{\romannumeral1}.5]{RT08}. 
  	  	 
  	  	If for some $R$ sufficiently large, there exists $y_{k} \in D_{R} \setminus D_{\frac{1}{R}}$ such that $h_{k}(y_{k} ) \rightarrow 0$. 
  	  	 
  	  	 Set $\displaystyle \rho_{k}=\inf_{x \in D_{2R} \setminus D_{\frac{1}{2R}}   }  \frac{ h_{k}(x)}{ \min\{ |x|-\frac{1}{2R}, 2R-|x|     \}       }$. Since 
  	  	 \begin{align*}
  	  	 \lim_{|x| \rightarrow \frac{1}{2R} or |x| \rightarrow 2R}  \frac{ h_{k}(x)}{ \min\{ |x|-\frac{1}{2R}, 2R-|x|     \}       }=+\infty, 
  	  	 \end{align*}
  	  	  there exists $\displaystyle x_{k} \in D_{2R} \setminus D_{\frac{1}{2R}}  \setminus (\partial D_{2R} \cup \partial D_{\frac{1}{2R}})$ such that 
  	  	  $\displaystyle \rho_{k}=\frac{ h_{k}(x_{k})}{ \min\{ |x_{k}|-\frac{1}{2R}, 2R-|x_{k}|     \}       }$. By our choice, 
  	  	  \begin{align*}
  	  	  \rho_{k} \leq  \frac{ h_{k}(y_{k})}{ \min\{ |y_{k}|-\frac{1}{2R}, 2R-|y_{k}|     \}       } \leq R h_{k}(y_{k}) \rightarrow 0, 
  	  	  \end{align*}
  	  	which implies $h_{k}(x_{k} ) \rightarrow 0$. For convenience, denote $\beta_{k}=h_{k}(y_{k})$. Set 
  	  	\begin{align*}
  	  	\displaystyle F_{k}(x)=e^{-\alpha_{k}} ( f_{k}(x_{k}+ \beta_{k} x ) -f_{k}(x_{k})      )   , \quad U_{k}(x)=u_{k}(x_{k}+\beta_{k}x)-\alpha_{k}  +\log \beta_{k}, 
  	  	  \end{align*}
  	  	where $\alpha_{k}$ is chosen later. Clearly, $|\nabla F_{k}|^{2}=2e^{2U_{k}}$. Let $\tilde{g}_{k}$ and $\tilde{\mu}_{k}$ are induced by $F_{k}$. 
  	  	For any $\rho>0$, by the choice of $\beta_{k}$, 
  	  	\begin{align*}
  	  	\int_{D_{\rho}} |K_{\tilde{g}_{k}}| d \tilde{\mu}_{k}=\int_{D_{ \beta_{k} \rho}(x_{k})} |K_{g_{k}}| d \mu_{k} \leq \int_{D_{2R} \setminus D_{\frac{1}{2R}}} |K_{g_{k}}| d \mu_{k} \rightarrow 0, \text{ as } k \rightarrow +\infty. 
  	  	\end{align*}
  	  Repeating the argument in \textbf{Step 1}, we could obtain for any $q \in [1,2)$, 
  	  \begin{align*}
  	  & \int_{D_{\rho}} |\nabla U_{k}|^{q} d \mu_{\mathbb{R}^{2}}\leq C(\rho), \\
  	  & \int_{D_{r}(x)} |\nabla U_{k}|^{q} d \mu_{\mathbb{R}^{2}} \leq C(q) r^{2-q}, \forall D_{r}(x) \subset \mathbb{R}^{2} .
  	  \end{align*}
  We take $\displaystyle \alpha_{k}=\frac{1}{|D_{1}|} \int_{D_{1}} ( u_{k}(x_{k}+\beta_{k}x)+\log \beta_{k}  )$. By Poinca$\acute{r}$e inequality, we conclude that $U_{k}$ converges to a harmonic function $U_{\infty}$ weakly in $W^{1,q}_{loc}(\mathbb{R}^{2})$. By the semi-continuity of weak convergence, 
  \begin{align*}
  \int_{D_{r}(x)} |\nabla U_{\infty}|^{q} \leq C(q) r^{2-q}, \quad \forall D_{r}(x) \subset \mathbb{R}^{2} , 
  \end{align*}
  which implies $|\nabla U_{\infty}| \equiv 0$. Therefore, $U_{\infty} \equiv 0$ since $\displaystyle \int_{D_{1}} U_{\infty}=0$. 
  
  By direct calculations, for any $D_{\rho}(x) $, $\displaystyle \int_{D_{\rho}(x)} |\tilde{A}_{k}|^{2} d \tilde{\mu}_{k}=\int_{ D_{\beta_{k} \rho}(x_{k}+\beta_{k} x ) } |A_{k}|^{2} d \mu_{k}$. If we set $\displaystyle \tilde{h}_{k}(x)=\inf\{r: \int_{D_{r}(x)} |\tilde{A}_{k}|^{2 } d \tilde{\mu}_{k} \geq \gamma_{0}  \}$, we have $\beta_{k} \tilde{h}_{k}(x)=h_{k}(x_{k}+\beta_{k}x)$. By the choice of $\rho_{k}$ and $\beta_{k}$, $D_{\beta_{k} \rho}(x_{k}+\beta_{k} x  )  \subset D_{2R} \setminus D_{\frac{1}{2R}}$, 
  \begin{align*}
  \tilde{h}_{k}(x)&=\frac{1}{\beta_{k}}h_{k}(x_{k}+\beta_{k}x)=\frac{1}{\beta_{k}}h_{k}(x_{k}+\beta_{k}x) \cdot \frac{ \min\{ |x_{k}+\beta_{k}x|-\frac{1}{2R}, 2R-|x_{k}+\beta_{k}x | \}  }{\min\{ |x_{k}+\beta_{k}x|-\frac{1}{2R}, 2R-|x_{k}+\beta_{k}x | \}}\\
  & \geq \frac{1}{\beta_{k}} \beta_{k} \cdot \frac{ \min\{ |x_{k}|-\frac{1}{2R}, 2R-|x_{k}| \}  }{\min\{ |x_{k}+\beta_{k}z|-\frac{1}{2R}, 2R-|x_{k}+\beta_{k}z | \}} \geq \frac{1}{2}.
  \end{align*}
  By  Hélein's convergence theorem(see  \cite[Theorem 5.1.1]{helein2002harmonic}) and \cite[Theorem  \uppercase\expandafter{\romannumeral1}.5]{RT08}, we conclude that $F_{k}$ converges to some $F_{\infty}$ (not a point since $U_{\infty}$ is constant) smoothly on $\mathbb{R}^{2}$ and $K_{\tilde{g}_{\infty} } \equiv 0$. By Theorem \ref{theorem intro}, $F_{\infty}$ is a plane, a contradiction to $\tilde{h}_{k}(0) \equiv 1$.

  In conclusion, for any $R$ sufficiently large, for any $\displaystyle x \in D_{R} \setminus D_{\frac{1}{R}}$, $h_{k}(x) > \mathscr{R}(R)>0$, then by  Hélein's convergence theorem, we know that $f_{k}$ converges weakly in $W^{2,2}(D_{ \frac{R}{2}} \setminus D_{\frac{2}{R}})$ since $u_{k}$ does not converge to $-\infty$. By the arbitrariness of $R$ and \cite[Theorem  \uppercase\expandafter{\romannumeral1}.5]{RT08}, $f_{k}$ converges smoothly to some $f_{\infty}$ on $\mathbb{R} \setminus  \{0\}$ with $|\nabla f_{\infty}|^{2}=2e^{2u_{\infty}}$.

      	\textbf{Step 3:}
      	By divergence theorem, 
      	for any $R_{1}>R_{2}>0$, we have
      	\begin{align*}
      	&\int_{\partial D_{R_{1}}} \frac{ \partial u_{k}}{\partial r}-\int_{\partial D_{R_{1}}} \frac{ \partial u_{k}}{\partial r}=\int_{\partial D_{r_{k}R_{1}}} \frac{ \partial u}{\partial r}-\int_{\partial D_{r_{k}R_{2}}} \frac{ \partial u}{\partial r} \\
      	=&\int_{ D_{r_{k}R_{1}} \setminus D_{r_{k}R_{2} }} \Delta u d \mu_{\mathbb{R}^{2}}=-\int_{ D_{r_{k}R_{1}} \setminus D_{r_{k}R_{2} }} K_{g} d \mu_{g},
      	\end{align*}
      	which implies $\displaystyle \lim_{R \rightarrow \infty} \int_{\partial D_{R}} \frac{ \partial u_{k}}{\partial r} $ exists for any $k$.
    We also have
      	\begin{align*}
      	&	\int_{\mathbb{R}^{2}} K_{g} d \mu_{g} =-\int_{\mathbb{R}^{2}} \Delta u d \mu_{\mathbb{R}^{2}}=-\lim_{R \rightarrow \infty} \int_{D_{r_{k} R}} \Delta u d \mu_{\mathbb{R}^{2}} \\
      =&	-\lim_{R\rightarrow \infty}  \int_{ \partial D_{R}} \frac{ \partial u_{k}}{ \partial r}  .
      	\end{align*} 
      	Since 
      	\begin{align*}
      \int_{R_{1}}^{R_{2}} \int_{0}^{2 \pi} | \frac{ \partial u_{\infty}}{\partial r} - \frac{ \partial u_{k}}{\partial r}  | d \theta dr \longrightarrow 0, \text{ as } k \rightarrow \infty, 
       	\end{align*}
      	then by the arbitrariness of $R_{1}$ and $R_{2}$, we have
      	\begin{align*}
      \int_{ \partial D_{R}}	\frac{ \partial u_{k}}{\partial r}  \longrightarrow  \int_{ \partial D_{R}} \frac{ \partial u_{\infty}}{\partial r}  \text{ a.e. } R  \in (0,\infty).
      	\end{align*}
      	Therefore, 
      	\begin{align*}
      	0 \geq \lim_{R \rightarrow \infty}  \int_{ \partial D_{R}} \frac{ \partial u_{\infty}}{\partial r}=2 \pi c_{1} \geq -2 \pi,
      	\end{align*}
      	which impies $-1\leq  c_{1} \leq 0.$
    
      		\textbf{Step 4:} If $c_{1} \in (-1,0)$, set $A=c_{1}+1$, $\rho=r^{A}$ and  $g_{A}=A^{2} r^{2A-2} g_{\mathbb{R}^{2}}$. We have
      			\begin{align*}
      	g_{A}&=A^{2} r^{2A-2} ( dr^{2}+r^{2} d \theta^{2}  ) \\
      		&=A^{2}  \rho^{ 2-\frac{2}{A}  } ( \frac{\rho^{ \frac{2}{A}-2  }}{A^{2}} d \rho^{2}+\rho^{ \frac{2}{A} }   d \theta^{2}  )=d \rho^{2} +A^{2} \rho^{2} d \theta^{2}. 
      		\end{align*}
      		Then  we can view $(\mathbb{R}^{2} \setminus \{0\},g_{A})$ as $\displaystyle \{ r \theta: r>0, 0 \leq \theta \leq 2A \pi\}$ by gluing the boundary. Set $\displaystyle \Sigma=f_{\infty}( \mathbb{R}^{2} \setminus \{0\}  )$ and up to multiplying a constant, we may assume $f_{\infty}^{*}(g_{\mathbb{R}^{3} })=g_{A}$ and $\lim_{|x| \rightarrow 0} f_{\infty}(0)=0$. 
      		
      		The following argument is similar as in the proof of \cite[Theorem $3$]{Hartman1959OnSI}. Since $c_{1} \neq 0$, then by Lemma \ref{lemma rank 1 implies line} and Corollary \ref{cor rank 1 implies line}, there exists $p \in \Sigma$ with $r^{*}(p)=1$ and $\Sigma$ contains part of a line $l(p)$ which passing $p$. If both ends of $l(p)$ can extend to infinity, $f_{\infty}^{-1}(l(p))$ is a geodesic line not passing the origin. 
      		
      		(Case 1). If $c_{1} \in (-1,-\frac{1}{2})$, $f_{\infty}^{-1}(l(p))$ will intersect itself, which leads to a contradiction. This means for any point of $\Sigma$ with $r^{*}(p)=1$, $l(p)$ is a ray starting from the origin and $f_{\infty}^{-1}(l(p))$ is also a ray starting from the origin. For a point $q$ with $r^{*}(q)=0$, let $\hat{l}(q)$ be the ray passing $f_{\infty}^{-1}(q)$. By Lemma \ref{lemma rank 1 implies line}, $r^{*}|_{ f_{\infty}(\hat{l}(q))   }=0$, then $f_{\infty}(\hat{l}(q))   $ is a ray starting from the origin by the isometry. In conclusion, $\Sigma$ is a Willmore cone. However, $l(\partial B_{1}^{g_{A}} (0) )=2 \pi A<2 \pi$, a contradiction to Theorem \ref{theorem countable embedded Willmore cones}.
      		
      		(Case 2). If $c_{1} \in [-\frac{1}{2},0)$, we can choose $\theta_{0}$ such that $\{r \theta_{0}:r>0\}$ is parallel to $f_{\infty}^{-1}(l(p))$. Set $\Omega=\{ r \theta:r>0, \theta_{0} <\theta<\theta_{0}+\pi \}$. By Lemma \ref{lemma rank 1 implies line} and isometry, $f_{\infty}(\Omega)$ is a cylinder, $f_{\infty}(\Omega^{c})$ is part of a Willmore cone, and the boundary of  $f_{\infty}(\Omega^{c})$ is a line, which implies $l(\partial B_{1}^{g_{A}} (0) ) \geq \pi+\pi$. However, $l(\partial B_{1}^{g_{A}} (0) )=2 \pi A<2 \pi$, which leads to a contradiction. Therefore, $\Sigma$ is a Willmore cone, a contradiction again by Theorem \ref{theorem countable embedded Willmore cones}.
      		
      		If $c_{1}=-1$. $(\mathbb{R}^{2} \setminus \{0\}, \frac{dr^{2}}{r^{2}}+d \theta^{2})$ is indeed $S^{1} \times \mathbb{R}$, then $\Sigma$ is a nontrivial embedded Willmore surface in $\mathbb{R}^{3}$. Note that the conclusion in  Step 1 of the proof of Theorem \ref{theorem intro} still holds  here by Liouville theorem and we can repeat the argument as in the proof of Theorem \ref{theorem intro}, which leads to a contradiction to the assumption  	($\mathscr{M}2$). 
      		
      		We conclude that $\displaystyle  c_{1}=0$. Then as the argument in \textbf{Step 3}, $\displaystyle \int_{\mathbb{R}^{2}} K_{g} d \mu_{g}=0$, which leads to a contradiction to the assumption ($\mathscr{M}1$), we complete the proof. 
 \end{proof}

 \section*{Appendix}
\cite[Lemma $2$]{Hartman1959OnSI}   plays an essential role in \cite{Hartman1959OnSI}, we give a proof of a 2-dimensional case here for the convenience of the reader. 

 Let $\Omega \subset \mathbb{R}^{2}$ be a domain. Let $f:\Omega  \rightarrow \mathbb{R}$ be a $C^{2}$ function. For convenience, set $p=\nabla f=(p_{1}(x_{1},x_{2}),p_{2}(x_{1},x_{2}) )$. Clearly, $p$ is a gradient map defined as in \cite{Hartman1959OnSI}. 
 
 Let $J(x)$ denote the Jacobian matrix $\displaystyle ( \frac{ \partial p_{i} }{ \partial x_{j}} )_{i,j}^{}$, $r(x)$ denote the rank of $J(x)$ 
 and $r^{*}(x)$ denote the largest interger $s$ with the property that every neighbourhood of $x$ contains a point $x^{*}$ with $r(x^{*})=s$. By a standard calculation, we know that the Gaussian curvature of the graph induced by $f$ vanishes if and only if $r \leq 1$. If $r^{*}(x)=0$,  there exists a neighbourhood $U$ of $x$ such that $\{(x,f(x)) : x \in U\}$ is contained in a plane.

 \begin{lemma}
 	\label{lemma rank 1 implies line}
 Assume  $\displaystyle \det J(x)$ is identically zero and  at a point $x^{0}=(x_{1}^{0},x_{2}^{0})$, $r(x^{0})=1$. Set $S=\{ x \in \Omega: r(x)=1  \}$. Then $p(x)$ is constant on  $l(x^{0}) \cap S$, where $l(x^{0}) \subset \mathbb{R}^{2}$ is a unique straight line passing $x^{0}$. Furthermore, $r(x)=1$ on $l(x^{0}) \cap S$. 
 \end{lemma}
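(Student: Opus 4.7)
The plan is to reduce to a local picture near $x^{0}$ and then propagate along the line $l(x^{0})$ by an open-closed argument.

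First, I would exploit lower semi-continuity of matrix rank: since $J(x^{0})$ has rank $1$ and $\det J \equiv 0$, there is an open neighbourhood $U$ of $x^{0}$ on which $r(x) \equiv 1$. Being symmetric of rank one, $J(x)$ factors smoothly on $U$ as $J(x) = \lambda(x)\, n(x) n(x)^{T}$ with $\lambda(x) \neq 0$ and $n(x)$ a unit vector, so the kernel of $J(x)$ is the line spanned by $v(x) := n(x)^{\perp}$. I set $v^{0} := v(x^{0})$ and define $l(x^{0})$ to be the unique straight line through $x^{0}$ in direction $v^{0}$.

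The heart of the argument is to show that integral curves of the field $v$ in $U$ are straight segments along which $p$ is constant. For this I would invoke the constant rank theorem: the map $p \colon U \to \mathbb{R}^{2}$ has constant rank $1$, so $p(U)$ is locally a $1$-dimensional submanifold, which I parametrize by arc length as $\phi(s)$, and write $p(x) = \phi(s(x))$ for a smooth submersion $s$. Differentiating gives $\partial_{i} p_{j} = \phi_{j}'(s)\, \partial_{i} s$; symmetry of $J_{ij}$ in $(i,j)$ forces $\nabla s$ to be collinear with $\phi'(s)$, so after adjusting signs one obtains $n(x) = \phi'(s(x))$ and $J(x) = |\nabla s(x)|\, n n^{T}$. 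Consequently the level sets of $s$ are precisely the integral curves of $v$, and along such a level set the vector $n = \phi'(s)$, and hence $v = n^{\perp}$, is a fixed direction; each level curve is therefore a straight segment, and $p = \phi(s)$ is constant on it. In particular the level curve through $x^{0}$ is a relatively open segment of $l(x^{0})$ on which $p \equiv p(x^{0})$ and $v \equiv v^{0}$.

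To extend from this local segment to all of $l(x^{0}) \cap S$ I would run an open-closed argument on the connected component $I$ of $0$ in $\{t \in \mathbb{R} : x^{0} + t v^{0} \in S\}$. Let $T \subset I$ be the set of $t$ with $p(x^{0} + t v^{0}) = p(x^{0})$ and $\ker J(x^{0} + t v^{0}) = \mathbb{R} v^{0}$. Clearly $0 \in T$. Openness of $T$ in $I$ follows by reapplying the local argument at any $t \in T$. Closedness uses only continuity of $p$ and the fact that for symmetric rank-$1$ matrices the kernel direction depends continuously on the matrix. Hence $T = I$, yielding $p \equiv p(x^{0})$ on the component of $x^{0}$ in $l(x^{0}) \cap S$. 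The final assertion that $r \equiv 1$ on $l(x^{0}) \cap S$ is immediate from the definition of $S$.

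The main obstacle is the straightness claim: it is not obvious a priori that integral curves of the kernel distribution of a rank-$1$ Hessian are straight lines. The crucial input is that $J = \nabla^{2} f$ is symmetric and $p = \nabla f$ is a gradient; combined with the constant-rank factorization, this forces $\nabla s \parallel \phi'(s)$, which in turn forces $n$ to be constant along each level curve and rigidifies the integral curves to straight segments.
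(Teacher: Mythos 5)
Your proposal is correct, and it establishes the same two facts as the paper (local straightness of the level sets of $p$, then propagation along the line) by a genuinely different local mechanism. The paper's Step 1 works in the explicit partial chart $q=(p_{1},x_{2})$ after arranging $\frac{\partial p_{1}}{\partial x_{1}}\neq 0$, shows that $p_{2}$ depends only on $q_{1}$, and uses the gradient-map symmetry $\frac{\partial p_{1}}{\partial x_{2}}=\frac{\partial p_{2}}{\partial x_{1}}$ to derive the affine identity \eqref{HN1}, from which the level sets of $p$ are seen to be line segments. You instead invoke the constant rank theorem to factor $p=\phi\circ s$ and let the symmetry of $J=\nabla^{2}f$ force $\nabla s\parallel\phi'(s)$; this is the same crucial input (it is exactly where the gradient structure of $p$ enters) but packaged in a coordinate-free way, and it correctly rigidifies the integral curves of the kernel distribution into straight segments on which $p$ is constant. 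The difference propagates into the continuation step: since the paper's chart only exists where $\frac{\partial p_{1}}{\partial x_{1}}\neq 0$, its maximality argument must track that quantity via the identity \eqref{HN2} to keep it bounded away from zero up to the first bad parameter $t_{0}$, whereas your closedness step is softer, needing only continuity of $p$ and of the kernel direction (indeed $J(x^{0}+t_{k}v^{0})v^{0}=0$ passes to the limit directly, and the limit matrix still has rank $1$ since the limit parameter lies in $S$). What the paper's computation buys in exchange is the explicit affine dependence of the level lines on $q_{1}$, which is the form reused in Hartman--Nirenberg-type arguments. One caveat shared by both proofs: the open-closed (respectively, maximality) argument yields constancy of $p$ only on the connected component of $x^{0}$ in $l(x^{0})\cap S$, which is what is actually needed downstream; and, as you observe, the final assertion that $r\equiv 1$ on $l(x^{0})\cap S$ is immediate from the definition of $S$.
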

 
 \begin{proof}
 	\textbf{Step 1:} We show that in a small neighbourhood $U$ of $x^{0}$, $\{x \in U: p(x)=p(x^{0})   \}$ is the intersection of a straight line passing $x^{0}$ and $U$. 
 	
 	Since $\det J(x^{0})=0, r(x^{0})=1$, we may assume $\displaystyle \frac{\partial p_{1}}{\partial x_{1}}|_{x^{0}} \neq 0$ up to a linear transformation of the coordinates. Consider the map $q(x_{1},x_{2})=(q_{1}(x_{1},x_{2}), q_{2}(x_{1},x_{2})  )=( p_{1}(x_{1},x_{2}), x_{2}    )$.The corresponding Jacobian matrix is
 	$
 	\begin{pmatrix}
 	\frac{\partial p_{1}}{\partial x_{1}} & 0 \\ \frac{\partial p_{1}}{\partial x_{2}} & 1
 	\end{pmatrix}
 	$, then we can introduce $(q_{1},q_{2})$ as new coordinates in $q(U)$ up to contracting $U$. Then 
 	$$
 	x_{2}(q_{1},q_{2})=q_{2}; \quad p(q_{1},q_{2})=(q_{1}, p_{2}(q_{1},q_{2})  ). 
 	$$
 	
 	By chain rules, we obtain
 	$$
 	\begin{pmatrix}
 	1  & 0 \\ \frac{\partial p_{2}}{\partial q_{1}} & \frac{\partial p_{2}}{\partial q_{2}} 
 	\end{pmatrix}
 	=
 	\begin{pmatrix}
 	\frac{\partial p_{1}}{\partial q_{1}}   & \frac{\partial p_{1}}{\partial q_{2}}  \\ \frac{\partial p_{2}}{\partial q_{1}} & \frac{\partial p_{2}}{\partial q_{2}} 
 	\end{pmatrix}
 	=
 	\begin{pmatrix}
 	\frac{\partial p_{1}}{\partial x_{1}}   & \frac{\partial p_{1}}{\partial x_{2}}  \\ \frac{\partial p_{2}}{\partial x_{1}} & \frac{\partial p_{2}}{\partial x_{2}} 
 	\end{pmatrix}
 	\begin{pmatrix}
 	\frac{\partial x_{1}}{\partial q_{1}}   & \frac{\partial x_{1}}{\partial q_{2}}  \\ 0& 1
 	\end{pmatrix},
 	$$
 	By the assumption, we have $\displaystyle \frac{\partial p_{2}}{ \partial q_{2}}=0$, which implies $p_{2}$ depends only on $q_{1}$ and is independent of $q_{2}$. Also by $\displaystyle 
 	\frac{ \partial p_{1}}{ \partial x_{2} }=\frac{ \partial p_{2}}{ \partial x_{1} }, 
 	$ we obtain
 	$$
 	\frac{\partial x_{1}}{\partial q_{2}}+\frac{\partial p_{2}}{\partial q_{1}}=\frac{\partial p_{1}}{\partial q_{2}}\frac{\partial x_{1}}{\partial q_{1}}=0,
 	$$
 	then we obtain for some function $\beta(q_{1})$, 
 	\begin{align}
 	\label{HN1}
 	\beta(q_{1})=x_{1}+\frac{\partial p_{2}}{\partial q_{1}}q_{2}=x_{1}+\alpha(q_{1})x_{2}, 
 	\end{align}
 	where we set $\alpha=\frac{\partial p_{2}}{\partial q_{1}}$. 
 	
 	Now for any $x=(x_{1},x_{2})$ such that $p(x)=p(x^{0})$, $q_{1}=q_{1}^{0}$ for some $q_{1}^{0}$, then locally, 
 	$\displaystyle p(x)=p(x^{0}) $
 	is equivalent to $\displaystyle \beta(q_{1}^{0})=x_{1}+\alpha(q_{1}^{0}) x_{2}$, 
 	which implies in $U$, $\{x \in U: p(x)=p(x^{0})   \}$ is the intersection of a straight line passing $x^{0}$ and $U$. 
 	
 	\textbf{Step 2:} Let $l(x^{0})=\{ (x_{1},x_{2}) \in \Omega: \beta(q_{1}^{0})=x_{1}+\alpha(q_{1}^{0}) x_{2}    \}$, we will show that $p$ is constant on $l(x^{0}) \cap S$. By (\ref{HN1}), 
 	\begin{align}
 	1=\frac{ \partial p_{1}}{\partial q_{1}}&=\frac{ \partial p_{1}}{\partial x_{1}}\frac{ \partial x_{1}}{\partial q_{1}}+\frac{ \partial p_{1}}{\partial x_{2}}\frac{ \partial x_{2}}{\partial q_{1}} \nonumber\\
 	&=\frac{ \partial p_{1}}{\partial x_{1}} \frac{ \partial ( -\alpha(q_{1}) x_{2}+\beta(q_{1})    )}{ \partial q_{1}  } \nonumber\\
 	&=\frac{ \partial p_{1}}{\partial x_{1}} (-x_{2} \frac{\partial \alpha}{\partial q_{1}  }+   \frac{\partial \beta}{\partial q_{1}  }         ) \label{HN2}. 
 	\end{align}
 	
 	We claim that $p(x)=p(x^{0})$ and $\displaystyle \frac{\partial p_{1}}{\partial x_{1}} \neq 0$ on $l(x^{0}) \cap S$. Assuming the contrary, there exists a curve $\gamma:[0,1] \rightarrow l(x^{0}) \cap S$ with $\gamma(0)=x^{0}, \gamma(1) \in \partial S$. 
 	Set
 	$$
 	t_{0}=\sup \{  s: p(\gamma(t))=p(x^{0}), \frac{ \partial p_{1}}{\partial x_{1}} (\gamma(t))\neq 0, t \leq s    \} <1.
 	$$
 	Clearly, $p(\gamma(t_{0}))=p(x^{0})$. Furthermore, (\ref{HN2}) holds in a small neighbourhood of each point $\gamma(s)$ for $s<t_{0}$. Along $\gamma|_{[0,s]}$, $\displaystyle  \frac{\partial \alpha}{\partial q_{1}  }, \frac{\partial \beta}{\partial q_{1}  }$ is independent of $s$, which implies
 	$\displaystyle \frac{\partial p_{1}}{\partial x_{1}}(\gamma(s))$ is bounded away from zero as $s \rightarrow t_{0}$. Applying the argument in \textbf{Step 1} at the point $p( \gamma(t_{0}))$, we will obtain a contradiction to the maximality of $t_{0}$. We complete the proof. 
 \end{proof}
 The following corollary follows from Lemma \ref{lemma rank 1 implies line} and an approximation argument immediately. 
 \begin{cor}
 	\label{cor rank 1 implies line}
 	If $r^{*}(x^{0})=1$ at a point $x^{0} \in \Omega$, then $p(x)$ is constant on the intersection of a straight line passing $x^{0}$ and $\Omega$. 
 \end{cor}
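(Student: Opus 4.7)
My plan is to produce the line through $x^0$ as a limit of the lines furnished by Lemma \ref{lemma rank 1 implies line} at nearby points of $S = \{x \in \Omega : r(x) = 1\}$. By the definition of $r^*(x^0) = 1$, I first choose a sequence $x^n \in S$ with $x^n \to x^0$. Lemma \ref{lemma rank 1 implies line} then assigns to each $x^n$ a unique line $\ell^n$ through $x^n$ whose direction $v^n$ is a unit vector spanning the one-dimensional kernel of $J(x^n)$, and along which $p$ takes the constant value $p(x^n)$ on $\ell^n \cap S$. After extracting a subsequence I can assume $v^n \to v$ on the unit circle, so the $\ell^n$ converge in the local Hausdorff topology to the straight line $\ell = \{x^0 + tv : t \in \mathbb{R}\}$ through $x^0$.

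Next I will upgrade the constancy of $p$ from $\ell^n \cap S$ to all of $\ell^n \cap \Omega$. Since we are in the standing setting $\det J \equiv 0$, the rank is at most one everywhere, so $\ell^n \cap \Omega$ decomposes into the relatively open piece $\ell^n \cap S$, on which $p \equiv p(x^n)$ by the Lemma, and the closed piece $\ell^n \cap \{r = 0\}$. On the latter, $J \equiv 0$ forces the directional derivative of $p$ along $v^n$ to vanish, so $p$ is constant on each connected subinterval of $\ell^n \cap \{r = 0\}$; since such a subinterval cannot cover all of $\ell^n \cap \Omega$ (its closure meets $\ell^n \cap S$ because $x^n$ is in $S$), continuity of $p$ at the boundary propagates the value $p(x^n)$ into each such subinterval. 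This gives $p \equiv p(x^n)$ on the whole $\ell^n \cap \Omega$.

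Finally, for any $y \in \ell \cap \Omega$, I will pick $y^n \in \ell^n \cap \Omega$ with $y^n \to y$, which is possible because $\ell^n \to \ell$ in Hausdorff sense and $\Omega$ is open. Then $p(y^n) = p(x^n)$ by the previous step, and continuity of $p$ yields $p(y) = \lim p(x^n) = p(x^0)$. Since $y$ was arbitrary, this completes the proof. The main obstacle I anticipate is the upgrade from $\ell^n \cap S$ to $\ell^n \cap \Omega$ described above; once that is in hand, the rest is a clean Hausdorff-limit plus continuity argument. A minor point to acknowledge is that different subsequences of $\{v^n\}$ may a priori yield different limit directions $v$, but this is harmless since the Corollary only asserts the existence of one such line through $x^0$.
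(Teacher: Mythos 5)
Your argument is correct and is essentially the approach the paper takes: the paper's entire proof of this corollary is the single sentence that it ``follows from Lemma \ref{lemma rank 1 implies line} and an approximation argument immediately,'' and your Hausdorff-limit-of-lines-plus-continuity construction is exactly that approximation argument spelled out. You additionally make explicit the upgrade from constancy on $\ell^{n}\cap S$ to constancy on $\ell^{n}\cap\Omega$ (using $J\equiv 0$ on the rank-zero set together with continuity), a step the paper leaves implicit but which is genuinely needed since the corollary asserts constancy on the full intersection with $\Omega$.
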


\nocite{*}
\bibliography{WS}
\end{document}